\DeclareMathOperator{\modulo}{\ mod }
\DeclareMathOperator{\SL}{SL}
\DeclareMathOperator{\II}{\textit{II}}
\DeclareMathOperator{\Z}{\mathbb{Z}}
\DeclareMathOperator{\R}{\mathbb{R}}
\DeclareMathOperator{\C}{\mathbb{C}}
\DeclareMathOperator{\Q}{\mathbb{Q}}
\DeclareMathOperator{\real}{Re}
\DeclareMathOperator{\Number}{N}
\DeclareMathOperator{\Gr}{Gr}
\DeclareMathOperator{\e}{\mathfrak{e}}
	\newtheorem{Satz}{Satz}[section]
	\newtheorem{theorem}[Satz]{Theorem}
	\newtheorem{lemma}[Satz]{Lemma}
	\newtheorem{proposition}[Satz]{Proposition}
	\theoremstyle{definition}
\date{}
\author{Moritz Dittmann, Heike Hagemeier and Markus Schwagenscheidt}
\title{Automorphic products of singular weight for simple lattices}
\begin{document}
\maketitle

\begin{abstract}
We classify the simple even lattices of square free level and signature $(2,n), n \geq 4$. A lattice is called simple if the space of cusp forms of weight $1+n/2$ for the dual Weil representation of the lattice is trivial. For a simple lattice every formal principal part obeying obvious conditions is the principal part of a vector valued modular form. Using this, we determine all holomorphic Borcherds products of singular weight (arising from vector valued modular forms with non-negative principal part) for the simple lattices. We construct the corresponding vector valued modular forms by eta products and compute expansions of the automorphic products at different cusps.
\end{abstract}

\section{Introduction}

	The singular theta correspondence allows the construction of automorphic forms for orthogonal groups from vector valued modular forms for $\SL_{2}(\mathbb{Z})$. Automorphic forms arising in this way have nice infinite product expansions similar to the Dedekind eta function. They are therefore called automorphic products or Borcherds products.
	
	In general, non-trivial holomorphic automorphic products do not exist for arbitrary weights. The smallest weight which can occur is called singular weight. One nice feature of holomorphic automorphic products of singular weight is that many of their Fourier coefficients vanish, which makes it possible to determine their Fourier expansions in some cases in a combinatorial way.
	
	In \cite{ScheithauerClassification}, Scheithauer classified reflective automorphic products of singular weight for lattices of square free level. The aim of the present work is the classification of holomorphic automorphic products of singular weight for so-called simple lattices of square free level (arising from vector valued modular forms with non-negative principle part).
	 
	 We describe our results in more detail: 
	
Let $L$ be an even lattice of signature $(2,n), n \geq 4$ even, with dual lattice $L'$. Let $\rho_L$ be the corresponding Weil representation and let $F$ be a weakly holomorphic vector valued modular form for $\rho_L$ of (negative) weight $k=1-n/2$ with component functions  	$f_{\gamma}(\tau) = \sum_{\substack{m  \in \Q}}c(\gamma,m)e(m\tau)$. Suppose $c(\gamma,m)\in\Z$ for $m<0$. The singular theta correspondence (see {\cite{Borcherds}},{\cite{BruinierBorcherdsProducts}}) sends $F$ to a meromorphic function $\Psi$ on the Grassmannian $\Gr(L)$ transforming as an automorphic form for the discriminant kernel $\Gamma(L)$ of weight $c(0,0)/2$. 

Pairing the form $F$ with a vector valued cusp form of (positive) weight $2-k$ for the dual Weil representation gives a meromorphic elliptic modular form of weight $2$ for $\SL_{2}(\Z)$, hence its constant term must vanish by the residue theorem. This gives conditions on the principal part of $F$. In fact there are no further conditions, so if the space of cusp forms above is trivial then any principal part occurs. 

This observation can be used to construct automorphic products. Namely, if the space of cusp forms of weight $1+n/2$ for the dual Weil representation of $L$ is trivial (such lattices will be called simple), then there exist modular forms of weight $1-n/2$ for the Weil representation for any given principal part, which can then be lifted to automorphic products. It turns out that for square free level there are only $15$ simple lattices up to isomorphism (cf. Theorem \ref{EinfacheGitter}):
\begin{longtable}{|c|c|c|}
		\hline
		$\text{Level}$ & $\text{Genus}$ & $\text{Lattice}$ \\
		\hline\hline
		$1$ & $\II_{2,10}$ &  \\
		 & $\II_{2,18}$ & \\
		 & $\II_{2,26}$ &  \\
		\hline
		$2$ & $\II_{2,6}(2_{\II}^{-2})$ & $D_{4}(-1) \oplus \II_{1,1} \oplus \II_{1,1}$ \\
		& $\II_{2,6}(2_{\II}^{-4})$ & $D_{4}(-1) \oplus \II_{1,1}(2) \oplus \II_{1,1}$\\
		& $\II_{2,6}(2_{\II}^{-6})$ & $D_{4}(-1) \oplus \II_{1,1}(2) \oplus \II_{1,1}(2)$ \\
		& $\II_{2,10}(2_{\II}^{+2})$ & $E_{8}(-1) \oplus \II_{1,1}(2) \oplus \II_{1,1}$ \\
		\hline
		$3$ & $\II_{2,4}(3^{+1})$ & $A_{2}(-1) \oplus \II_{1,1} \oplus \II_{1,1}$ \\
		& $\II_{2,4}(3^{-3})$ & $A_{2}(-1) \oplus \II_{1,1}(3) \oplus \II_{1,1}$ \\
		& $\II_{2,4}(3^{+5})$ & $A_{2}(-1) \oplus \II_{1,1}(3) \oplus \II_{1,1}(3)$ \\
		& $\II_{2,8}(3^{-1})$ & $E_{6}(-1) \oplus \II_{1,1} \oplus \II_{1,1}$\\
		\hline
		$5$ & $\II_{2,6}(5^{+1})$ & $A_{4}(-1) \oplus \II_{1,1} \oplus \II_{1,1}$ \\
		\hline
		$6$ & $\II_{2,4}(2_{\II}^{+2}3^{+1})$ & $A_2(-1) \oplus \II_{1,1}(2) \oplus \II_{1,1}$\\
		 & $\II_{2,4}(2_{\II}^{+4}3^{+1})$ & $A_2(-1) \oplus \II_{1,1}(2) \oplus \II_{1,1}(2)$\\
		\hline
		$7$ & $\II_{2,8}(7^{+1})$ & $A_{6}(-1) \oplus \II_{1,1} \oplus \II_{1,1}$\\
		\hline
\end{longtable}
The weight of an automorphic product is completely determined by the corresponding vector valued modular form. We are interested in automorphic products of so-called singular weight, which is given by $k=n/2-1$. We also want them to be holomorphic, so we assume that all coefficients of the principal part of the corresponding vector valued modular form are non-negative. This is not a necessary condition, but for simplicity we are only considering vector valued modular forms with this property. In section \ref{sec:Existence} we show that the only holomorphic automorphic products of singular weight on simple lattices coming from vector valued modular forms with non-negative principal part are the following (cf. Theorem \ref{thm:ProdukteAufEinfachenGittern}). 
	\textit{
		\begin{itemize}
			\item $\II_{2,26}$: The singular weight is $12$ and the corresponding modular form $F$ has principal part $e(-\tau)$.
			\item $\II_{2,6}(2_{\II}^{-6})$: The singular weight is $2$ and the corresponding modular form $F$ has principal part $e(-\tau/2)\e_{\gamma}$ for some $\gamma \in L'/L$ with $Q(\gamma) = 1/2 \mod 1$.
			\item $\II_{2,10}(2_{\II}^{+2})$: The singular weight is $4$ and the corresponding modular form $F$ has principal part $e(-\tau/2)\e_{\gamma}$
			for some $\gamma \in L'/L$ with $Q(\gamma) = 1/2 \mod 1$.
			\item $\II_{2,4}(3^{+5})$: The singular weight is $1$ and the corresponding modular form $F$ has principal part $e(-\tau/3)\e_{\gamma}+ e(-\tau/3)\e_{-\gamma}$ for some $\gamma \in L'/L$ with $Q(\gamma) = 2/3 \mod 1$.
		\end{itemize}}

	The automorphic product of weight $12$ for $\II_{2,26}$ is well known and can be constructed as the Borcherds product of $1/\Delta$ (see  \cite{BorcherdsInfiniteProducts}). The product for the lattice of level $2$ and signature $(2,10)$ has already been described in \cite{Borcherds}, example 13.7, and \cite{ScheithauerWeil}, section 8. The other two products seem to be new. In the last section we explicitly construct them as lifts of eta products (cf. Theorem \ref{thm:Konstruktion}):
	\textit{\begin{enumerate}
\item
The vector valued modular form corresponding to the case $L'/L=\II_{2,4}(3^{+5})$ can be obtained as $F=F_{f,\Gamma_1(3),\gamma}$  
where 
\[
f=\frac{\eta(\tau)}{\eta(3\tau)^3}
\] 
and $\gamma\in L'/L$ is an element of norm $Q(\gamma)\equiv -1/3\mod 1$. 
\item The vector valued modular form corresponding to the case $L'/L=\II_{2,6}(2_{\II}^{-6})$ can be obtained as $F=F_{f,\Gamma_1(2),\gamma}$ where
\[
f=\frac{\eta(\tau)^4}{\eta(2\tau)^8}
\] 
and $\gamma\in L'/L$ is an element of norm $Q(\gamma)\equiv 1/2\mod 1$. 
\end{enumerate}}

 Note that in contrast to most of the known examples, the corresponding modular forms $F$ for the new products are not symmetric, i.e. not invariant under the action $F^{\sigma} = \sum_{\gamma}f_{\gamma}\e_{\sigma(\gamma)}$ of the orthogonal group.
	
The rest of the last section is dedicated to computing the Fourier coefficients of these two automorphic products at various cusps.	

	\subsection*{Acknowledgements}

	The work of this paper extends the doctoral thesis of the second author \cite{Hagemeier}. The second author likes to thank the supervisor of this thesis, Jan Bruinier, for his support and encouragement.
	
		Moreover, we thank Nils Scheithauer who supervised the master's theses of the first and the third author and suggested the topic of this paper. Without his constant support and effort this work would not have been possible.

	We also like to thank Sebastian Opitz who performed the necessary computer calculations in the determination of the simple lattices in section $8$.

	\section{Dimension formulas for spaces of vector valued modular forms}

		We recall the definition of vector valued modular forms for the Weil representation from \cite{BruinierBorcherdsProducts}:
	
	Let $L$ be an even lattice of signature $(b^{+},b^{-})$ with quadratic form $q$ and associated bilinear form $\langle \cdot , \cdot\rangle$. We assume that the difference $r = b^{+} - b^{-}$ is even. This condition is satisfied by all lattices of square free level. Later we will only consider lattices of type $(b^{+},b^{-}) = (2,n)$ with $n \geq 4$ even, so the condition on $r$ is no restriction. We use the notation $e(z) := e^{2\pi i z}$.
	
	Let $L'$ be the dual lattice of $L$ and let $N$ be the level of $L$, i.e. the smallest positive integer such that $Nq(\gamma) \in \Z$ for all $\gamma \in L'$. The discriminant form $D = L'/L$ is a finite abelian group of order $|\det(L)|$ and the modulo $1$ reduction of the quadratic form $q$ induces a $\Q/\Z$-valued quadratic form $Q$ and a corresponding bilinear form $(\cdot,\cdot)$ on $D$. Note that since $NL' \subseteq L$ every element of $D$ has order dividing $N$. 
	
	The group ring $\mathbb{C}[D]$ is the set of all formal linear combinations $\sum_{\gamma \in D}a_{\gamma}\e_{\gamma}$ with $a_{\gamma} \in \mathbb{C}$ and formal basis vectors $\e_{\gamma}$. The Weil representation $\rho_{L}$ of $\SL_{2}(\Z)$ on $\C[D]$ is defined on the generators $T = \left( \begin{smallmatrix} 1 & 1 \\ 0 & 1 \end{smallmatrix}\right)$ and $S = \left(\begin{smallmatrix}0 & -1 \\1 & 0 \end{smallmatrix} \right)$ by
	\begin{align*}
	\rho_{L}(T)\e_{\gamma} &= e(Q(\gamma))\e_{\gamma}, \\
	\rho_{L}(S)\e_{\gamma} &= \frac{e(-r/8)}{\sqrt{|D|}}\sum_{\beta \in D}e(-(\beta,\gamma))\e_{\beta}.
	\end{align*}
	For example, a matrix $M \in \Gamma_{1}(N)$ acts as $\rho_{L}(M)\e_{\gamma} = e(bQ(\gamma))\e_{\gamma}$ (see \cite{ScheithauerWeil}). The dual representation will be denoted by $\rho_{L}^{*}$.
	
	 A weakly holomorphic vector valued modular form for the Weil representation $\rho_{L}$ of weight $k \in \mathbb{Z}$ is a holomorphic function $F = \sum_{\gamma \in D}f_{\gamma}\e_{\gamma}: \mathbb{H} \to \mathbb{C}[D]$ which transforms as $F(M\tau) =(c\tau + d)^{k}\rho_{L}(M)F(\tau)$ for $M = \left( \begin{smallmatrix} a & b \\ c & d \end{smallmatrix}\right) \in \SL_{2}(\mathbb{Z})$ and whose component functions $f_{\gamma}$ have Fourier expansions of the form
	\[
	f_{\gamma}(\tau) = \sum_{\substack{m \in \mathbb{Z} + Q(\gamma) \\ m \gg -\infty}}c(\gamma,m)e(m\tau ), \qquad c(\gamma,m) \in \mathbb{C}.
	\]
	Modular forms for the dual representation $\rho_{L}^{*}$ are defined analogously.
	
	A modular form $F$ for $\rho_{L}$ is called a holomorphic modular form (resp. cusp form) if the coefficients $c(\gamma,m)$ in the Fourier expansions of the $f_{\gamma}$ vanish for all $m < 0$ (resp. for all $m \leq 0$). The corresponding spaces are denoted by $M_{k,\rho_{L}}$ and $S_{k,\rho_{L}}$. Since there are no holomorphic modular forms of negative weight, a weakly holomorphic modular form of negative weight is completely determined by its principal part, i.e. the finite sum
	\begin{equation}\label{principalpart}
	\sum_{\gamma\in D}\sum_{m<0}c(\gamma,m)e(m\tau)\e_\gamma.
	\end{equation}

	Using the Selberg trace formula or the Riemann-Roch theorem one can show that the dimension of the space of holomorphic modular forms $M_{k,\rho_{L}^{*}}$ for the dual Weil representation $\rho_{L}^{*}$ of weight $k$ with $k\geq 2$ equals
		\begin{equation*}
		\dim(M_{k,\rho_{L}^{*}}) = d+\frac{dk}{12}-\alpha(e(k/4)S)-\alpha((e(k/6)ST)^{-1})-\alpha(T),
		\end{equation*}  
		where $d$ is the dimension of the biggest subspace $V_0$ of $\C[D]$ on which $-I=S^2$ acts by multiplication with $e(-k/2)$ and $\alpha(X)$ is the sum of the numbers $\beta_j, 1\leq j\leq d$, where the eigenvalues of $X$ are $e(\beta_j)$ and $0\leq\beta_j<1$ (cf. \cite{BorcherdsGKZ}, \cite{Freitag}).
		
		 The following theorem gives an explicit dimension formula in terms of the Gauss sums $G(n,L)=\sum_{\gamma\in D}e(nQ(\gamma))$ and the numbers of elements of a given norm in the discriminant form of $L$.
		
	\begin{theorem}
		Let $L$ be an even lattice of signature $(b^+,b^-)$, level $N$ and even $r = b^{+}-b^{-}$, and let $k\geq 2$ be an integer. Then the dimension of $M_{k,\rho_{L}^*}$ is given by
		\begin{align*}
		\dim(M_{k,\rho_{L}^*})= d + \frac{dk}{12}- \alpha_{1} - \alpha_{2} - \alpha_{3}		 
		\end{align*}
		with
		\begin{align*}
		d &= |D|/2+c|D^2|/2, \\
		\alpha_1&=\frac{d}{4}-\frac{1}{4\sqrt{|D|}}e((2k+b^+-b^-)/8)(G(2,L)+cG(-2,L)),\\
		\alpha_2&=\frac{d}{3}+\frac{1}{3\sqrt{3|D|}}\real(e((4k+3b^+-3b^--10)/24)(G(1,L)+cG(-3,L))),\\
		\alpha_3&= \sum_{j=1}^N \frac{j}{2N}\Number(D,j)-c\sum_{j=1}^4 \frac{j}{8}\Number(D^2,j).
		\end{align*}
		where $c=(-1)^{(2k+b^+-b^-)/2}$ and $N(D,j)$ is the number of elements $\gamma \in D$ of norm $Q(\gamma) = j/N \mod 1$ and $D^n=\{\gamma\in D:n\gamma=0\}$.
		\end{theorem}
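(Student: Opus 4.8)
The plan is to evaluate the four pieces of the formula
\[
\dim(M_{k,\rho_L^*}) = d+\frac{dk}{12}-\alpha(e(k/4)S)-\alpha((e(k/6)ST)^{-1})-\alpha(T)
\]
recalled just above, using only the explicit action of the Weil representation on the generators $S$ and $T$ together with Milgram's formula $G(1,L)=\sqrt{|D|}\,e((b^+-b^-)/8)$. First I would pin down $V_0$ and the value of $d$. Since $S^2=-I$ is central, $\rho_L^*(-I)$ acts on $\C[D]$ by $\e_\gamma\mapsto e((b^+-b^-)/4)\e_{-\gamma}$, so it is diagonalised by the decomposition $\C[D]=V^{+}\oplus V^{-}$ into the $(\pm1)$-eigenspaces of the involution $\e_\gamma\mapsto\e_{-\gamma}$; these have dimensions $(|D|+|D^2|)/2$ and $(|D|-|D^2|)/2$, and $-I$ acts on them by $+e((b^+-b^-)/4)$ and $-e((b^+-b^-)/4)$ respectively. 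As $b^+-b^-$ is even, $e(-k/2)$ equals exactly one of these two values, and a short parity check shows the corresponding eigenspace is $V^{+}$ precisely when $(2k+b^+-b^-)/2$ is even. This identifies $V_0$ and yields $d=|D|/2+c|D^2|/2$ with $c=(-1)^{(2k+b^+-b^-)/2}$.

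Next I would reduce each of the three $\alpha$-terms to traces. On $V_0$ the relevant operators have very small order: since $S^2=-I$ and $(ST)^3=-I$ act on $V_0$ by $e(-k/2)$, one gets $(e(k/4)S)^2=\mathrm{id}$ and $(e(k/6)ST)^3=\mathrm{id}$ on $V_0$, while $\rho_L^*(T)$, being diagonal, has order dividing $N$ there. For any operator $g$ on a finite-dimensional space with $g^m=\mathrm{id}$, the number $\alpha(g)=\sum_{0\le\beta<1}\beta\cdot(\text{mult of }e(\beta))$ is a fixed rational linear combination of the traces $\operatorname{tr}_{V_0}(g^j)$, $0\le j<m$; for $m=2,3$ this is immediate, and for $T$ one simply groups the eigenvalues $e(-Q(\gamma))$ by norm. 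To rewrite traces over $V_0$ as traces over all of $\C[D]$, I would use that $V_0$ is the image of the projection $P_0=\tfrac12\bigl(1+e(k/2)\rho_L^*(-I)\bigr)$, so that $\operatorname{tr}_{V_0}(X)=\tfrac12\operatorname{tr}_{\C[D]}(X)+\tfrac12 e(k/2)\operatorname{tr}_{\C[D]}(X\rho_L^*(-I))$ for every $X$.

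It then remains to compute finitely many traces over $\C[D]$ directly from the definitions. The diagonal entries of $\rho_L^*(T^j)$ are $e(-jQ(\gamma))$, so $\operatorname{tr}\rho_L^*(T^j)$ is a counting function in the norms of $D$, and the $\rho_L^*(-I)$-twisted trace brings in the norms of the subgroup $D^2$; assembling these gives $\alpha_3$ in terms of $\Number(D,j)$ and $\Number(D^2,j)$. The diagonal entries of $\rho_L^*(S)$ and $\rho_L^*(S)^3$ are, up to a fixed root of unity, $e(\pm2Q(\gamma))/\sqrt{|D|}$, so their traces involve $G(\pm2,L)$; this produces $\alpha_1$. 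Likewise, writing out $\rho_L^*(ST)$ and $\rho_L^*((ST)^2)$ and completing the square in the resulting exponential sums shows that their traces involve $G(\pm1,L)$ and $G(\pm3,L)$, the $G(\pm1,L)$ being replaced by $\sqrt{|D|}\,e(\pm(b^+-b^-)/8)$ via Milgram; this produces $\alpha_2$. In each case one finally normalises every eigenvalue into $[0,1)$ and collects the scalar parts into the $d/4$- and $d/3$-terms; this normalisation is the source of the explicit phase factors $e((2k+b^+-b^-)/8)$ and $e((4k+3b^+-3b^--10)/24)$.

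The computations themselves are all elementary, so the hard part will be the bookkeeping of the various roots of unity. I expect the $ST$-term to be the most delicate, because there an order-$3$ and an order-$2$ contribution get twisted together and one must carefully choose the representative in $[0,1)$ of each eigenvalue; a sign slip changes the exponent $(4k+3b^+-3b^--10)/24$. One should also check that the reduction in the second step remains valid when $e(k/4)S$ or $e(k/6)ST$ happens to have order strictly smaller than $2$ resp. $3$ on $V_0$, which is automatic since the argument only uses that the operator satisfies the stated relation and that its eigenvalues are roots of unity of the stated order.
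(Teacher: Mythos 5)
Your plan is correct and is essentially the paper's own argument: the proof in the paper likewise identifies $V_0$ as the span of $\{\e_\gamma+c\e_{-\gamma}\}$ (the $c$-eigenspace of $\iota\colon\e_\gamma\mapsto\e_{-\gamma}$) to obtain $d=|D|/2+c|D^2|/2$, and then delegates precisely the trace/Gauss-sum evaluations of $\alpha_1,\alpha_2,\alpha_3$ that you describe in your second and third steps to Lemmas 2 and 4 of Bruinier's Picard-groups paper. One remark should you actually carry the bookkeeping through: since $\operatorname{tr}_{V_0}(X)=\tfrac12\operatorname{tr}(X)+\tfrac{c}{2}\operatorname{tr}(X\iota)$ already carries a factor $\tfrac12$, the $S$-term comes out as $\alpha_1=\frac{d}{4}-\frac{1}{8\sqrt{|D|}}\,e((2k+b^+-b^-)/8)\bigl(G(2,L)+cG(-2,L)\bigr)=\frac{d}{4}-\frac{1}{4\sqrt{|D|}}\real\bigl(e((2k+b^+-b^-)/8)G(2,L)\bigr)$, so the prefactor $\frac{1}{4\sqrt{|D|}}$ in front of the symmetrized Gauss sum in the displayed statement is off by a factor of $2$ (this is easily checked against $\dim M_k(\SL_2(\Z))$ for the trivial discriminant form), whereas the $\alpha_2$- and $\alpha_3$-terms are consistent with the computation you outline.
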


	\begin{proof}
		The space $V_0$ is spanned by $\{\e_\gamma+c\e_{-\gamma}:\gamma\in D\}$, so that $d=|D|/2+c|D^2|/2$.
		We denote the numbers $\alpha(e(k/4)S), \alpha((e(k/6)ST)^{-1})$ and $\alpha(T)$ by $\alpha_1,\alpha_2$ and $\alpha_3$ respectively. The quantities $\alpha_{1},\alpha_{2}$ can be computed as in \cite{BruinierPicardGroups}, Lemma 2. As in the proof of Lemma 4 in \cite{BruinierPicardGroups} it can be seen that
		\begin{align*}
		\alpha_3=\frac{1}{2}\sum_{\gamma\in D}(-Q(\gamma)-\lfloor-Q(\gamma)\rfloor)+\frac{c}{2}\sum_{\gamma\in D^2}(-Q(\gamma)-\lfloor-Q(\gamma)\rfloor).
		\end{align*}
		\end{proof}

		For lattices of square free level, both the Gauss sums and the number of elements of a given norm can easily be computed using the formulas from Theorem 3.9 in \cite{ScheithauerWeil} and section 3 in \cite{ScheithauerClassification}.
		
		We also need formulas for the dimensions of the spaces of cusp forms $S_{k,\rho_{L}^*}$. Using Eisenstein series (cf. \cite{BruinierBorcherdsProducts} chapter 1.2.3) one can show that if $k>2$ then the codimension $\alpha_4$ of $S_{k,\rho^{*}_{L}}$ in $M_{k,\rho_{L}^{*}}$ is equal to 
		\begin{equation*}
		\alpha_{4} = \frac{1}{2}|\{\gamma\in D:Q(\gamma)=0 \modulo 1\}|+\frac{c}{2}|\{\gamma\in D^2:Q(\gamma)=0 \modulo 1\}|.
		\end{equation*}

	\section{Simple lattices of square free level}

		In this section we show that there are only finitely many simple lattices (i.e. lattices of signature $(2,n)$ such that the space of cusp forms of weight $k=1+n/2$ for the dual Weil representation is trivial) of square free level and then determine all of these lattices. In order to do so, we need bounds on the $\alpha_i$. 
		
		Using $|G(n,L)| \leq \sqrt{|D|}\sqrt{|D^{n}|}$ (Lemma 1 in \cite{BruinierPicardGroups}) it is easily seen that
		\begin{align*}
		\left|\alpha_1 - d/4\right|&\leq \frac{\sqrt{|D^2|}}{4}, \\
		\left|\alpha_2 - d/3\right|&\leq \frac{1+\sqrt{|D^3|}}{3\sqrt{3}}.
		\end{align*}
		Estimating $\alpha_3$ and $\alpha_4$ is more difficult and is done in the proofs of the following propositions, which give bounds on the size of the discriminant form of a simple lattice.
%		We make use of the fact that, for a given lattice $L$,  $\alpha_3$ and $\alpha_4$ only depend on $k \mod 2$, rather then on $k$ itself. We distinguish cases.		
\begin{proposition}
Let $L$ be a lattice of square free level and signature $(2,n)$ with $n\geq 8$. Suppose $S_{1+n/2,\rho_L^*}=\{0\}$. Then $|L'/L|<45$.
\end{proposition}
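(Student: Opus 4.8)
The plan is to establish the contrapositive: writing $D=L'/L$ and $k=1+n/2$, I will show that $\dim S_{k,\rho_L^*}>0$ whenever $|D|\ge 45$. Since $n\ge8$ is even, $k\ge5$ is an integer, and from $b^+-b^-=2-n$ one computes $(2k+b^+-b^-)/2=2$, so the sign occurring in the dimension formula above is $c=1$, hence $d=\tfrac12(|D|+|D^2|)$, and, since $k>2$, $\dim S_{k,\rho_L^*}=\dim M_{k,\rho_L^*}-\alpha_4$.

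First I would rewrite this so as to separate a main term from error terms. Writing $\overline x\in[0,1)$ for the representative of $x\in\mathbb Q/\mathbb Z$ and using $\overline{Q(\gamma)}+\overline{-Q(\gamma)}\in\{0,1\}$, the expression for $\alpha_3$ obtained above together with the formula for $\alpha_4$ yield the identity
\[
\alpha_3+\alpha_4=d-\tfrac12 S(D)-\tfrac c2 S(D^2),\qquad S(A):=\sum_{\gamma\in A}\overline{Q(\gamma)},
\]
so that, with $c=1$,
\[
\dim S_{k,\rho_L^*}=\frac{d(k-7)}{12}-\Bigl(\alpha_1-\tfrac d4\Bigr)-\Bigl(\alpha_2-\tfrac d3\Bigr)+\tfrac12 S(D)+\tfrac12 S(D^2).
\]
By the estimates recalled above, $\bigl|\alpha_1-\tfrac d4\bigr|\le\tfrac14\sqrt{|D^2|}$ and $\bigl|\alpha_2-\tfrac d3\bigr|\le\tfrac1{3\sqrt3}\bigl(1+\sqrt{|D^3|}\bigr)$, and for square free level $|D^2|$ and $|D^3|$ are the orders of the $2$- and $3$-parts of $D$. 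Since $k\ge5$ the term $d(k-7)/12$ is at least $-d/6$, the extreme case being $k=5$ (that is, $n=8$); so what remains is to bound the positive quantity $\tfrac12 S(D)+\tfrac12 S(D^2)$ below well enough to absorb $d/6$ and the $O(\sqrt{|D|})$ errors.

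The lower bound for $S(D)$ is the crux. Since $S(D)+S'(D)=|D|-|D_0|$ with $S'(D):=\sum_\gamma\overline{-Q(\gamma)}$ and $D_0=\{\gamma\in D:Q(\gamma)=0\modulo1\}$, we have $S(D)=\tfrac12(|D|-|D_0|)+\tfrac12\bigl(S(D)-S'(D)\bigr)$, and $S(D)-S'(D)$ is a weighted sum of the differences $\Number(D,j)-\Number(D,N-j)$ of the numbers of elements of norm $j/N$ and $(N-j)/N$. For square free level $D=\bigoplus_{p\mid N}D_p$ with each $D_p$ a non-degenerate quadratic $\mathbb F_p$-space, and by the explicit formulas of Scheithauer for these counts (Theorem~3.9 of \cite{ScheithauerWeil}, \S3 of \cite{ScheithauerClassification}) — equivalently, by the bound $|G(n,L)|\le\sqrt{|D|\,|D^n|}$ applied to the Fourier expansion of the sawtooth function — one obtains $|S(D)-S'(D)|\le c_1\sqrt{|D|}$ for an explicit constant $c_1$; here the asymmetry $\Number(D,j)\neq\Number(D,N-j)$ occurs only at primes $p\equiv3\bmod4$ with $\dim_{\mathbb F_p}D_p$ odd and is of lower order. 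One also needs $|D_0|\le\lambda|D|$ with $\lambda$ bounded away from $1$: writing $|D_0|=\prod_{p\mid N}|D_{p,0}|$ and using the classical count of zeros of a non-degenerate quadratic form over $\mathbb F_p$, one checks $|D_{p,0}|/|D_p|\le3/4$ in general, the ratio being strictly smaller except for a short explicit list of small forms $D_p$ (like $2_{\II}^{+2}$ or $3^{+2}$) which $|D|\ge45$ rules out, so $\lambda$ stays comfortably below $2/3$ (and much below once $N$ is odd). Feeding $\tfrac12 S(D)\ge\tfrac14(|D|-|D_0|)-O(\sqrt{|D|})\ge\tfrac14(1-\lambda)|D|-O(\sqrt{|D|})$ into the displayed formula and using $k\ge5$ (retaining the extra positive contributions of $\tfrac12 S(D^2)$ and of the summand $|D^2|$ in $d$ when $N$ is even), the right-hand side is positive as soon as $|D|\ge45$.

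The genuine obstacle is this last step. Near $|D|=45$ the error contributions — the Gauss sums inside $\alpha_1$ and $\alpha_2$, and the asymmetry $S(D)-S'(D)$ — have the same order of magnitude as the saving $\tfrac14(1-\lambda)|D|$ over the loss coming from $d(k-7)/12$, so obtaining the clean bound $45$ rather than something larger forces one to control the norm distributions of the $D_p$, and of $D^2$ when $N$ is even, essentially exactly instead of through crude estimates, and to dispose of the finitely many small discriminant forms separately.
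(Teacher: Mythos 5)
Your reformulation is sound as far as it goes: the identity $\alpha_3+\alpha_4=d-\tfrac12S(D)-\tfrac{c}{2}S(D^2)$ is correct, and so is the resulting expression for $\dim S_{k,\rho_L^*}$. The gap is exactly where you locate it yourself, and it is fatal to the stated bound: the positivity at $|D|\ge45$ does not follow from the estimates you invoke. At $k=5$ you must beat $d/6=|D|/12+|D^2|/12$, and your main term is $\tfrac14(1-\lambda)|D|$. For $|D|\ge45$ the best uniform value of $\lambda$ is $9/16$ (attained by the rank-$6$ even $2$-adic forms; your figure $3/4$ is attained only by $2_{\II}^{+2}$ itself, and any odd Jordan component has zero-ratio at most $5/9$), which leaves a margin of $\bigl(\tfrac{7}{64}-\tfrac1{12}\bigr)|D|=\tfrac{5}{192}|D|\approx1.2$ at $|D|=45$ --- already smaller than the single error term $\tfrac{1+\sqrt{|D^3|}}{3\sqrt3}\approx1.5$, before accounting for $\tfrac{\sqrt{|D^2|}}{4}$ or the asymmetry term. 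Moreover the bound $|S(D)-S'(D)|\le c_1\sqrt{|D|}$ is not free: expanding the sawtooth in a Fourier series brings in the Gauss sums $G(n,L)$ for $n$ \emph{not} coprime to $N$, which are only bounded by $\sqrt{|D|\,|D^{\gcd(n,N)}|}$ and can greatly exceed $\sqrt{|D|}$. So as written your argument yields some finite bound on $|D|$, but not $45$, and closing it would require exactly the case-by-case norm-distribution computations you defer.

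The paper sidesteps all of this with one observation you did not use: $\alpha_3$ and $\alpha_4$ depend only on the parity of $k$, not on $k$ itself. Applying the dimension formula at weight $n/2-1$ (same parity as $1+n/2$) and using $\dim S_{n/2-1,\rho_L^*}\ge0$ gives an \emph{upper} bound $\alpha_3+\alpha_4\le\tfrac{(n/2+4)d}{12}+\tfrac{\sqrt{|D^2|}}{4}+\tfrac{1+\sqrt{|D^3|}}{3\sqrt3}$. Substituting this into the formula at weight $1+n/2$, both the $n$-dependence and the entire norm-distribution contribution cancel, leaving $\dim S_{1+n/2,\rho_L^*}\ge\tfrac{d}{6}-\tfrac{\sqrt{|D^2|}}{2}-\tfrac{2+2\sqrt{|D^3|}}{3\sqrt3}$, which is positive for $|D|\ge45$ using only $d\ge|D|/2$, $|D^3|\le|D|$ and $\tfrac{x}{12}(x-6)\ge-\tfrac34$. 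Replace your lower bound for $S(D)$ by this comparison of two weights; it is both simpler and sharp enough to give $45$.
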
	
\begin{proof}
Note that $\alpha_3$ and $\alpha_4$ do not depend on the weight $k$, but only on the parity of $k$. Hence we can apply the dimension formula with $k=n/2-1$ to obtain bounds on $\alpha_3$ and $\alpha_4$. 
\begin{align*}
		\alpha_3+\alpha_4&\leq d+\frac{(n/2-1)d}{12}-\frac{d}{4}+\frac{\sqrt{|D^2|}}{4}-\frac{d}{3} \\
		& \qquad\qquad +\frac{1+\sqrt{|D^3|}}{3\sqrt{3}}-\dim(S_{n/2-1,\rho_{L}^*})\\
										 &\leq \frac{(n/2+4)d}{12}+\frac{\sqrt{|D^2|}}{4}+\frac{1+\sqrt{|D^3|}}{3\sqrt{3}}.
		\end{align*}
The dimension formula for the weight $k=1+n/2$ now gives
\begin{align*}
		\dim(S_{1+n/2,\rho_{L}^*})&\geq \frac{d}{6}-\frac{\sqrt{|D^2|}}{2}-\frac{2+2\sqrt{|D^3|}}{3\sqrt{3}}\\
%															&=\frac{|D|}{12}+\frac{|D^2|}{12}-\frac{\sqrt{|D^2|}}{2}-\frac{2+2\sqrt{|D^3|}}{3\sqrt{3}}\\
															&=\frac{|D|}{12}+\frac{\sqrt{|D^2|}}{12}\left(\sqrt{|D^2|}-6\right)-\frac{2+2\sqrt{|D^3|}}{3\sqrt{3}}\\
															&\geq\frac{|D|}{12}-\frac{3}{4}-\frac{2}{3\sqrt{3}}-\frac{2\sqrt{|D|}}{3\sqrt{3}}\\
															&\geq\frac{\sqrt{|D|}}{12}\left(\sqrt{|D|}-\frac{8}{\sqrt{3}}\right)-\frac{3}{4}-\frac{2}{3\sqrt{3}}.
		\end{align*} 
The right hand side is positive for $|D|\geq 45$.
\end{proof}	
		
\begin{proposition}
Let $L$ be a lattice of square free level $N$ and signature $(2,6)$. Suppose $S_{4,\rho_L^*}=\{0\}$. Then $N<33$ or $|L'/L|<137$.
\end{proposition}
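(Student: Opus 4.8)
The plan is to mimic the argument of the preceding proposition, the new feature being that for signature $(2,6)$ the singular weight $n/2-1$ equals $2$, the one weight at which the codimension $\alpha_4$ of $S_{k,\rho_L^*}$ in $M_{k,\rho_L^*}$ is not available; replacing the step that bounds $\alpha_3+\alpha_4$ by a weaker input is what produces the extra alternative $N<33$. I set $k=1+n/2=4$; for signature $(2,6)$ one has $c=(-1)^{(2k+b^+-b^-)/2}=1$, so $d=|D|/2+|D^2|/2$ and $\alpha_4=\tfrac12\Number(D,N)+\tfrac12\Number(D^2,4)$, i.e.\ half the number of isotropic vectors of $D$ plus half that of $D^2$. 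It suffices to prove $\dim S_{4,\rho_L^*}>0$ whenever $N\geq 33$ is square free and $|L'/L|\geq 137$.

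Since the dimension formula holds for all $k\geq 2$ and $\alpha_3$ depends only on the parity of the weight, subtracting the formulas at weights $4$ and $2$ cancels $\alpha_3$ and gives
\[
\dim M_{4,\rho_L^*}=\dim M_{2,\rho_L^*}+\frac d6-\bigl(\alpha_1^{(4)}-\alpha_1^{(2)}\bigr)-\bigl(\alpha_2^{(4)}-\alpha_2^{(2)}\bigr),
\]
where superscripts record the weight. Estimating $|\alpha_i^{(4)}-\alpha_i^{(2)}|$ by twice the bounds $|\alpha_1-d/4|\leq\sqrt{|D^2|}/4$ and $|\alpha_2-d/3|\leq(1+\sqrt{|D^3|})/(3\sqrt3)$, using $\dim M_{2,\rho_L^*}\geq 0$, and subtracting $\alpha_4$ yields
\[
\dim S_{4,\rho_L^*}\ \geq\ \frac{|D|+|D^2|}{12}-\frac{\sqrt{|D^2|}}{2}-\frac{2\bigl(1+\sqrt{|D^3|}\bigr)}{3\sqrt3}-\alpha_4 .
\]

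The heart of the proof is an upper bound for $\alpha_4$, i.e.\ for the number of isotropic vectors in $D$ and in $D^2$. Since $N$ is square free, $D\cong\bigoplus_{p\mid N}D_p$ with each $D_p$ a nondegenerate quadratic $\mathbb F_p$-space (of even dimension and even type for $p=2$, by square freeness), isotropic counts are multiplicative in $p$, and $D^2=D_2$. I would plug in the \emph{exact} numbers of isotropic vectors of the $D_p$ coming from the classification of $p$-adic discriminant forms (Scheithauer's formulas, cited above) --- it is essential to retain the $\pm$ correction terms, since the crude bound ``number of isotropic vectors of $D_p$ is $\leq 2|D_p|/p$'' becomes too wasteful once $N$ has three or more prime factors --- together with $b^++b^-=8$, so that $\sum_p r_p\leq 8$ and in particular $|D^2|=|D_2|\leq 64$ once $2\mid N$ and $N\geq 33$. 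This turns the inequality above into an explicit bound $\dim S_{4,\rho_L^*}\geq\kappa(N)\,|D|-O(\sqrt{|D|})$ with $\kappa(N)>0$ for every square-free $N\geq 33$ (the delicate cases being $N$ with exactly two or three small prime factors). As $|D^2|\in\{1,4,16,64\}$, $|D^3|\in\{1,3,9,27\}$, and the odd part of $|D|$ is $\geq 17$ whenever $2\mid N$ and $N\geq 33$ (so $|D^2|\leq|D|/17$), a finite check over the finitely many resulting shapes of $(N,D_2,D_3)$ shows the bound is positive once $|D|\geq 137$. Hence $N\geq 33$ and $|L'/L|\geq 137$ force $\dim S_{4,\rho_L^*}>0$, so $L$ is not simple --- the contrapositive of the proposition.

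The main obstacle is the estimate of $\alpha_4$: the small-prime part of $D$, above all the $2$-adic part, which can be as large as $(\mathbb F_2)^6$ and may carry nearly half of its vectors as isotropic ones, defeats any soft argument, so one genuinely needs the local structure of square-free-level discriminant forms together with careful bookkeeping of the ranks $r_p$, and a handful of borderline pairs $(N,D)$ with $|D|$ just below the bound must be checked by hand.
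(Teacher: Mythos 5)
Your reduction is sound: since $k=2$ and $k=4$ have the same parity, $d$ and $\alpha_3$ agree at both weights, so subtracting the two dimension formulas and using $\dim M_{2,\rho_L^*}\geq 0$ does give
\[
\dim S_{4,\rho_L^*}\ \geq\ \frac{|D|+|D^2|}{12}-\frac{\sqrt{|D^2|}}{2}-\frac{2\bigl(1+\sqrt{|D^3|}\bigr)}{3\sqrt{3}}-\alpha_4,
\]
and this can in principle be closed by an upper bound on $\alpha_4$. But this is a genuinely different route from the paper's, which never estimates $\alpha_4$ by isotropic counts in this proposition. Instead it applies the dimension formula at the \emph{odd} weight $k=3$ together with $\dim S_{3,\rho_L^*}\geq 0$ to get $\alpha_3^o+\alpha_4^o\leq\frac13(|D|-|D^2|)+\cdots$, transfers parities via the elementary inequality $\alpha_3^e+\alpha_4^e-\alpha_3^o-\alpha_4^o\leq|D^2|$, and then applies the weight-$4$ formula; the hypothesis $N\geq 33$ enters only through the trivial bounds $|D^2|\leq|D|/16$ and $|D^3|\leq|D|/11$. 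This costs a factor of $2$ in the leading term ($|D|/24$ instead of your $|D|/12$) but needs no local analysis of the discriminant form whatsoever; the isotropic-count lemma is reserved for the signature $(2,4)$ case, where the weight-$2$ obstruction you correctly identify really does force it.

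The gap in your version is that the step you yourself call the heart of the proof is only asserted. What you need is essentially $\frac12\prod_{p\mid N}r_p<\frac1{12}$ up to $O(\sqrt{|D|})$ corrections, where $r_p$ is the proportion of isotropic vectors in $D_p$; the worst case $r_p=(2p-1)/p^2$ (rank $2$, plus type) gives for instance $\frac12\cdot\frac59\cdot\frac{21}{121}\approx 0.048$ for $N=33$ and $\frac12\cdot\frac34\cdot\frac59\cdot\frac{13}{49}\approx 0.055$ for $N=42$, against the available $1/12\approx 0.083$. So the claim does appear to be true, but the margins are small enough that the verification is real work, and the check is not a finite one over shapes of $(N,D_2,D_3)$ as you describe: \emph{every} prime divisor of $N$ contributes a factor to the isotropic count, so you need a uniform decay argument for large $N$ together with an explicit enumeration of the small square-free $N\geq 33$ with two or three prime factors (and of the admissible ranks and types at each prime). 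As written, the proposal correctly reduces the proposition to a plausible combinatorial estimate but does not prove it.
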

\begin{proof}
As mentioned in the proof of the last proposition, $\alpha_3$ and $\alpha_4$ do not depend on the weight $k$ but only on the parity of $k$. We write $\alpha_3^e, \alpha_4^e$ (resp. $\alpha_3^o,\alpha_4^o$) for the values of $\alpha_3$ and $\alpha_4$ if $k$ is even (resp. odd). Note that 
\begin{align*}
		\alpha_3^e+\alpha_4^e-\alpha_3^o-\alpha_4^o\leq |D^2|.
		\end{align*}
The dimension formula for $k=3$ yields	
\begin{align*}
		\alpha_3^o+\alpha_4^o&\leq \frac{1}{3}\left(|D|-|D^2|\right)+\frac{\sqrt{|D^2|}}{4}+\frac{1+\sqrt{|D^3|}}{3\sqrt{3}}-\dim(S_{3,\rho_{L}^*})\\
										 &\leq \frac{1}{3}\left(|D|-|D^2|\right)+\frac{\sqrt{|D^2|}}{4}+\frac{1+\sqrt{|D^3|}}{3\sqrt{3}}.
		\end{align*}
The dimension formula for $k=4$ then gives
		\begin{align*}
		\dim(S_{4,\rho_{L}^*}) &\geq \frac{9}{24}\left(|D|+|D^2|\right)-\frac{\sqrt{|D^2|}}{4}-\frac{1+\sqrt{|D^3|}}{3\sqrt{3}}-\alpha_3^e-\alpha_4^e\\
		&\geq \frac{|D|}{24}-\frac{7}{24}|D^2|-\frac{\sqrt{|D^2|}}{2}-\frac{2+2\sqrt{|D^3|}}{3\sqrt{3}}.
		\end{align*}
		If $N\geq 33$, then $|D^2|\leq|D|/16$ and $|D^3|\leq|D|/11$ and therefore
		\begin{align*}
		\dim(S_{4,\rho_{L}^*})\geq\frac{3}{128}|D|-\frac{\sqrt{|D|}}{8}-\frac{2}{3\sqrt{3}}-\frac{2\sqrt{|D|}}{3\sqrt{33}}.
		\end{align*}
		The right hand side is positive for $|D|\geq 137$.
		\end{proof}	
\begin{proposition}
Let $L$ be a lattice of square free level $N$ and signature $(2,4)$. Suppose $S_{3,\rho_L^*}=\{0\}$. Then $N<101$ or $|L'/L|<3277$.
\end{proposition}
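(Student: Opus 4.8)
The space $S_{k,\rho_L^*}$ in question sits in weight $k=1+n/2=3$, while the singular weight is $n/2-1=1$. Both of these are odd, so $\alpha_3,\alpha_4$ take their ``odd'' values at either weight. Unlike the cases $n\ge 6$, here the singular weight $1$ is too small to be inserted into the dimension formula, and estimating $\alpha_3+\alpha_4$ from $\dim S_k\ge 0$ at the nearest admissible weight of opposite parity (namely $k=4$) loses a whole term of size $\tfrac{1}{24}|D|$ and proves nothing. The plan is instead to use the weight $k=2$, which is still covered by the dimension formula, to control $\alpha_3$, and to bound $\alpha_4$ separately.

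At $k=2$ one has $c=-1$ and $d=(|D|-|D^2|)/2$. Since $\dim M_{2,\rho_L^*}\ge 0$, the dimension formula together with $|\alpha_1-d/4|\le\sqrt{|D^2|}/4$ and $|\alpha_2-d/3|\le(1+\sqrt{|D^3|})/(3\sqrt{3})$ gives
\[
\alpha_3^{\mathrm{even}}\le\tfrac{7}{24}\bigl(|D|-|D^2|\bigr)+\tfrac{\sqrt{|D^2|}}{4}+\tfrac{1+\sqrt{|D^3|}}{3\sqrt{3}}.
\]
From the formula for $\alpha_3$ above one reads off $\alpha_3^{\mathrm{odd}}-\alpha_3^{\mathrm{even}}\le|D^2|$, hence
\[
\alpha_3^{\mathrm{odd}}\le\tfrac{7}{24}|D|+\tfrac{17}{24}|D^2|+\tfrac{\sqrt{|D^2|}}{4}+\tfrac{1+\sqrt{|D^3|}}{3\sqrt{3}}.
\]
For the odd weight $k=3>2$ the codimension formula applies, so $\alpha_4^{\mathrm{odd}}=\tfrac{1}{2}|D_0|+\tfrac{1}{2}|D^2_0|$, where $D_0$ (resp.\ $D^2_0$) denotes the set of isotropic vectors in $D$ (resp.\ in $D^2=\{\gamma:2\gamma=0\}$). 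For square free level $|D_0|$ is multiplicative over the primes dividing $N$, and Scheithauer's formulas for the number of norm-$0$ vectors in a $p$-adic discriminant form bound each local factor by roughly $|D_p|/p$ (exactly so when the local rank is odd); hence $|D_0|$, and likewise $|D^2_0|$, is small relative to $|D|$ once $N$ is large, while $|D^2|$ and $|D^3|$ are bounded by fixed powers of $2$ resp.\ $3$ since every local rank is at most the lattice rank $6$. Substituting all of this into
\[
\dim S_{3,\rho_L^*}=\tfrac{5}{4}d-\alpha_1-\alpha_2-\alpha_3^{\mathrm{odd}}-\alpha_4^{\mathrm{odd}},\qquad d=(|D|+|D^2|)/2,
\]
together with $\alpha_1\le d/4+\sqrt{|D^2|}/4$ and $\alpha_2\le d/3+(1+\sqrt{|D^3|})/(3\sqrt{3})$, and simplifying, one is left with a lower bound of the form
\[
\dim S_{3,\rho_L^*}\ge\tfrac{1}{24}|D|-\tfrac{3}{8}|D^2|-\tfrac{1}{2}|D_0|-\tfrac{1}{2}|D^2_0|-\tfrac{\sqrt{|D^2|}}{2}-\tfrac{2+2\sqrt{|D^3|}}{3\sqrt{3}}.
\]
It then remains to check that under $N\ge 101$ and $|D|\ge 3277$ this right hand side is strictly positive, which contradicts $S_{3,\rho_L^*}=\{0\}$.

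I expect this last verification to be the main obstacle, since the estimate is genuinely tight --- which is exactly why the thresholds are as large as $101$ and $3277$. The term $\tfrac{1}{2}|D_0|$ can come close to $\tfrac{1}{24}|D|$; what rescues the argument is that $|D_0|=\prod_{p\mid N}|(D_p)_0|$ is that large only when every local rank equals $2$, and in that case $|D|=\prod_{p\mid N}p^{2}$ is itself large. Carrying this out cleanly requires a short case analysis over the set of primes dividing $N$ and their multiplicities $m_p\le 6$, using the explicit Gauss sum and norm-count formulas for square free level from \cite{ScheithauerWeil} and \cite{ScheithauerClassification}; the cut-offs $N<101$ and $|D|<3277$ are precisely what is needed for each remaining case to come out with $\dim S_{3,\rho_L^*}>0$.
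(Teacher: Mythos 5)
Your setup coincides with the paper's: you bound $\alpha_3^{\mathrm{even}}$ from $\dim M_{2,\rho_L^*}\ge 0$ at $k=2$, pass to $\alpha_3^{\mathrm{odd}}$ via $\alpha_3^{\mathrm{odd}}-\alpha_3^{\mathrm{even}}\le|D^2|$, and arrive at exactly the paper's inequality
\[
\dim S_{3,\rho_{L}^*}\;\ge\;\tfrac{1}{24}|D|-\tfrac{3}{8}|D^2|-\tfrac{\sqrt{|D^2|}}{2}-\tfrac{2+2\sqrt{|D^3|}}{3\sqrt{3}}-\alpha_4 .
\]
Up to that point everything is correct. The gap is the step you yourself flag as ``the main obstacle'': you never actually establish a bound on $\alpha_4$ (equivalently on the number of isotropic elements $|D_0|$) that is strong enough to make the right-hand side positive at the stated thresholds. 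Saying that Scheithauer's local formulas make $|D_0|$ ``small relative to $|D|$ once $N$ is large,'' and deferring to ``a short case analysis over the primes dividing $N$ and their multiplicities,'' is not a proof, and the specific constants $101$ and $3277$ come precisely from this missing estimate.

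The paper closes this with a clean uniform lemma rather than a case analysis: each Jordan component $p^{\epsilon n_p}$ contributes at most $2p^{n_p-1}$ isotropic elements if $n_p$ is even and at most $p^{n_p-1}$ if $n_p$ is odd; the oddity formula forces at least one component of odd rank (since the signature $-2$ of $L$ is not divisible by $4$); hence $|D_0|\le 2^{\omega(N)-1}|D|/N$, and for square free $N>100$ one checks $2^{\omega(N)-1}/N\le 1/25$ (separately for $\omega(N)\le 3$ and $\omega(N)\ge 4$). This gives $\alpha_4\le |D^2|/2+|D|/50$, and with $|D^2|\le 2^6$, $|D^3|\le 3^6$ the lower bound becomes $\tfrac{13}{600}|D|-71>0$ for $|D|\ge 3277$. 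Note that your proposed worst case (``all local ranks equal to $2$'') does not capture the right mechanism: without the oddity-formula improvement one only gets $|D_0|\le 2^{\omega(N)}|D|/N$, whose worst value for $N>100$ is $16/210$, leaving a margin of only $\tfrac{1}{24}-\tfrac{4}{105}=\tfrac{1}{280}$ per $|D|$ and hence a threshold near $|D|\approx 2.9\cdot 10^4$, too weak for the proposition as stated. So the missing lemma is not a routine verification but the actual content of this case; you need to supply it.
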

\begin{proof}
Applying the dimension formula for $k=2$ yields
		\begin{align*}
		\alpha_3^e&\leq \frac{7}{24}(|D|-|D^2|)+\frac{\sqrt{|D^2|}}{4}+\frac{1+\sqrt{|D^3|}}{3\sqrt{3}}-\dim(M_{2,\rho_{L}^*}) \\
							&\leq \frac{7}{24}(|D|-|D^2|)+\frac{\sqrt{|D^2|}}{4}+\frac{1+\sqrt{|D^3|}}{3\sqrt{3}}
		\end{align*}
		and thus
		\begin{align}\label{eqn:S_3}
		\dim(S_{3,\rho_{L}^*})\geq \frac{1}{24}|D|-\frac{3}{8}|D^2|-\frac{\sqrt{|D^2|}}{2}-\frac{2+2\sqrt{|D^3|}}{3\sqrt{3}}-\alpha_4.
		\end{align}
Now suppose $N>100$. The following lemma then shows that 
\begin{equation}\label{a4bound}
		\alpha_4\leq \frac{|D^2|}{2}+\frac{|D|}{50}.
		\end{equation}
Note that $|D^2|\leq 64 = 2^{6}$ and $|D^3|\leq 729 = 3^{6}$ as the $p$-ranks of the Jordan components of $D$ are bounded by the rank $2+n=6$ of $L$. Hence,  we can use \eqref{eqn:S_3} to obtain
		\begin{equation*}
		\dim(S_{3,\rho_{L}^*})> \frac{13}{600}|D|-71.
		\end{equation*} 
		This is greater than $0$ for $|D|\geq 3277$. 
\end{proof}
It remains to show equation \eqref{a4bound}.
	\begin{lemma}
		Suppose $L$ is a lattice of signature $(2,4)$ and square free level $N$ with $N>100$. Then 
		\begin{equation*}
		\alpha_4\leq \frac{|D^2|}{2}+\frac{|D|}{50}.
		\end{equation*}
		\end{lemma}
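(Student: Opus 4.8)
The plan is to reduce the estimate to a combinatorial inequality about the local components of the discriminant form and then exploit the signature restriction on a lattice of signature $(2,4)$.

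First, note that the $\alpha_4$ in question is the one attached to odd weight, where $c=(-1)^{(2k+b^+-b^-)/2}=(-1)^{(6-2)/2}=1$, so that
\[
\alpha_4=\tfrac12 N_0(D)+\tfrac12 N_0(D^2),\qquad N_0(A):=|\{\gamma\in A:Q(\gamma)\equiv 0\bmod 1\}|.
\]
Since $N$ is square free, $D=\bigoplus_{p\mid N}D_p$ orthogonally with each $D_p\cong\mathbb{F}_p^{n_p}$ carrying a non-degenerate quadratic form; moreover $D^2=D_2$, and $N_0$ is multiplicative along this decomposition, so $N_0(D)=N_0(D_2)\prod_{p>2}N_0(D_p)$ and $N_0(D^2)=N_0(D_2)$. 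Bounding $N_0(D_2)\le|D_2|=|D^2|$ gives $\alpha_4\le\tfrac12|D^2|+\tfrac12|D^2|\prod_{p\mid N,\ p>2}N_0(D_p)$, and since $|D|=|D_2|\prod_{p>2}|D_p|$ it suffices to prove
\[
\prod_{p\mid N,\ p>2}\frac{N_0(D_p)}{|D_p|}\ \le\ \frac1{25}.
\]

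For the local factors I would use the classical count of isotropic vectors over $\mathbb{F}_p$: for $p$ odd one has $N_0(D_p)=p^{\,n_p-1}$ if $n_p$ is odd, and $N_0(D_p)\le p^{\,n_p-1}+p^{\,n_p/2}-p^{\,n_p/2-1}$ if $n_p$ is even, so in every case $N_0(D_p)/|D_p|\le(2p-1)/p^2<2/p$, with equality $N_0(D_p)/|D_p|=1/p$ precisely when $n_p$ is odd. The crucial extra input is the signature: the signature $\bmod 8$ of the discriminant form equals $b^+-b^-=-2$, and via the oddity formula (the congruence relating this signature, the $2$-adic oddity, and the $p$-excesses of the genus symbol) this reduces, modulo $4$, to $\sum_{p\mid N,\ p>2}n_p(p-1)\equiv 2\pmod 4$. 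As $p-1\equiv0\pmod4$ for $p\equiv1\pmod4$ and $p-1\equiv2\pmod4$ for $p\equiv3\pmod4$, there must be an odd, hence nonzero, number of primes $p_0\equiv3\pmod4$ dividing $N$ with $n_{p_0}$ odd (if no such configuration is possible there is no lattice of signature $(2,4)$ of that level and the claim is vacuous). For any such $p_0$ one gets the exact value $N_0(D_{p_0})/|D_{p_0}|=1/p_0$.

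Finally, let $M$ be the odd part of $N$ and $k=\omega(M)$ the number of its prime divisors; since $N>100$ is square free, $M>50$. Combining the two previous steps,
\[
\prod_{p\mid N,\ p>2}\frac{N_0(D_p)}{|D_p|}\ \le\ \frac1{p_0}\prod_{\substack{p\mid M\\ p\ne p_0}}\frac{2p-1}{p^2}\ <\ \frac1{p_0}\prod_{\substack{p\mid M\\ p\ne p_0}}\frac2p\ =\ \frac{2^{k-1}}{M},
\]
so it only remains to verify $M\ge 25\cdot 2^{k-1}$. For $k=1$ and $k=2$ this follows at once from $M>50$, and for $k\ge3$ it follows because $M$ is then at least the product of the first $k$ odd primes (e.g.\ $M\ge 3\cdot5\cdot7=105>100$ for $k=3$), which outgrows $25\cdot2^{k-1}$. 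I expect the only genuine obstacle to be the third paragraph: one must correctly extract the mod-$4$ parity statement from the signature formula for genus symbols, and one should double-check that the borderline cases (such as $N=102$, where $M=51$) really do satisfy the inequality — which they do, with a little room to spare.
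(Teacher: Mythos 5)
Your proposal is correct and follows essentially the same route as the paper: bound the $D^2$-term trivially, count isotropic elements in each Jordan component ($p^{n_p-1}$ for odd rank, roughly $2p^{n_p-1}$ for even rank), invoke the oddity formula to guarantee an odd-rank component at an odd prime (saving one factor of $2$), and finish with a case analysis on the number of prime divisors using $N>100$. Your mod-$4$ refinement locating the odd-rank component at a prime $p_0\equiv 3\pmod 4$ is a correct but slightly stronger statement than the paper needs; otherwise the two arguments coincide.
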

		
		\begin{proof}
		Obviously,
		\begin{equation*}
		\alpha_4\leq \frac{|D^2|}{2}+\frac{1}{2}|\{\gamma\in D: Q(\gamma)=0\mod 1\}|.
		\end{equation*}
		Using propositions 3.1 and 3.2 in \cite{ScheithauerClassification} one can see that the number of elements of norm $0 \modulo 1$ in a Jordan component $p^{\epsilon n_p}$ with $n_p\geq 1$ is less or equal to $2p^{n_p-1}$ if $n_p$ is even and $p^{n_p-1}$ if $n_p$ is odd. If the $p$-ranks of all Jordan components of $D$ were even, the signature of $D$ (which is the signature of $L \mod 8$) would be divisible by $4$ by the oddity formula. Hence there is at least one Jordan component of odd rank. This, together with Proposition 3.3 in \cite{ScheithauerClassification}, implies that 
		\begin{equation*}
		|\{\gamma\in D: Q(\gamma)=0\mod 1\}|\leq 2^{(\omega(N)-1)}\frac{|D|}{N}
		\end{equation*}
		where $\omega(N)$ denotes the number of prime factors of $N$. Thus, if the number of prime factors of $N$ is less than $4$, then 
		\begin{equation*}
		|\{\gamma\in D: Q(\gamma)=0\mod 1\}|\leq \frac{|D|}{25}
		\end{equation*}
		and otherwise
		\begin{equation*}
		|\{\gamma\in D: Q(\gamma)=0\mod 1 \}|\leq \frac{8}{2\cdot 3\cdot 5 \cdot 7}|D|=\frac{4}{105}|D|<\frac{|D|}{25},
		\end{equation*}
		which completes the proof.
		\end{proof}

In all cases either the level or the size of the discriminant form is bounded. But there are only finitely many discriminant forms of a given size. The same is true for a fixed level if one also fixes the signature. Hence the number of simple lattices is finite. 
For his master's thesis \cite{Opitz} Sebastian Opitz wrote a Magma programme that computes the dimensions of the spaces of modular forms and cusp forms of a given weight $k$ for a given discriminant form. Using this programme he checked the remaining cases, giving the following result.
		
 \begin{theorem}\label{EinfacheGitter}
			Every simple even lattice of signature $(2,n), n \geq 4$, of square free level is isomorphic to one of the following lattices:
		\begin{align*}
		\begin{array}{|c|c|c|}
		\hline
		\text{Level} & \text{Genus} & \text{Lattice} \\
		\hline\hline
		1 & \II_{2,10} &  \\
		 & \II_{2,18} & \\
		 & \II_{2,26} &  \\
		\hline
		2 & \II_{2,6}(2_{\II}^{-2}) & D_{4}(-1) \oplus \II_{1,1} \oplus \II_{1,1} \\
		& \II_{2,6}(2_{\II}^{-4}) & D_{4}(-1) \oplus \II_{1,1}(2) \oplus \II_{1,1}\\
		& \II_{2,6}(2_{\II}^{-6}) & D_{4}(-1) \oplus \II_{1,1}(2) \oplus \II_{1,1}(2) \\
		& \II_{2,10}(2_{\II}^{+2}) & E_{8}(-1) \oplus \II_{1,1}(2) \oplus \II_{1,1} \\
		\hline
		3 & \II_{2,4}(3^{+1}) & A_{2}(-1) \oplus \II_{1,1} \oplus \II_{1,1} \\
		& \II_{2,4}(3^{-3}) & A_{2}(-1) \oplus \II_{1,1}(3) \oplus \II_{1,1} \\
		& \II_{2,4}(3^{+5}) & A_{2}(-1) \oplus \II_{1,1}(3) \oplus \II_{1,1}(3) \\
		& \II_{2,8}(3^{-1}) & E_{6}(-1) \oplus \II_{1,1} \oplus \II_{1,1}\\
		\hline
		5 & \II_{2,6}(5^{+1}) & A_{4}(-1) \oplus \II_{1,1} \oplus \II_{1,1} \\
		\hline
		6 & \II_{2,4}(2_{\II}^{+2}3^{+1}) & A_2(-1) \oplus \II_{1,1}(2) \oplus \II_{1,1}\\
		 & \II_{2,4}(2_{\II}^{+4}3^{+1}) & A_2(-1) \oplus \II_{1,1}(2) \oplus \II_{1,1}(2)\\
		\hline
		7 & \II_{2,8}(7^{+1}) & A_{6}(-1) \oplus \II_{1,1} \oplus \II_{1,1} \\
		\hline
		\end{array}
		\end{align*}
		\end{theorem}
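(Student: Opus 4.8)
The plan is to combine the three bounding propositions with a finite computer search. First I would split the lattices $L$ of signature $(2,n)$, $n\geq 4$ even, of square free level $N$ into four ranges according to $n$. For $n\geq 8$, Proposition 3.1 gives $|L'/L|<45$, so only finitely many discriminant forms remain; since every even lattice of signature $(2,n)$ is determined up to isomorphism by its genus, which in turn is determined by the discriminant form together with the signature (using that square free level and signature $(2,n)$ pin down the genus via the Jordan decomposition and the oddity formula), it suffices to enumerate all discriminant forms $D$ with $|D|<45$ of square free level that can occur for a lattice of signature $(2,n)$ with $n\geq 8$, and for each compute $\dim S_{1+n/2,\rho_L^*}$ by the dimension formula of Theorem 2.1 together with the codimension formula for $\alpha_4$. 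Only those with vanishing cusp space survive. The same is done for $n=6$ using Proposition 3.2 (either $N<33$, giving finitely many $D$ once $|D|$ is also bounded by the $p$-rank constraint $|D^2|\le 2^6$, $|D^3|\le 3^6$, or $|D|<137$) and for $n=4$ using Proposition 3.3 (either $N<101$ or $|D|<3277$). In each case the search space is finite.

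Next I would carry out the actual enumeration and testing. For each admissible signature and each candidate discriminant form in the relevant range, one lists the even lattices in the corresponding genus (for the small ranks and square free levels at hand these genera are typically one-class, but in general one invokes that all lattices in a genus have isomorphic discriminant forms and hence isomorphic Weil representations, so that the cusp space $S_{1+n/2,\rho_L^*}$ depends only on $D$ and the signature mod $8$). Then one evaluates the dimension formula; this requires the Gauss sums $G(n,L)$ and the counts $\Number(D,j)$, $\Number(D^m,j)$, which for square free level are given in closed form by Theorem 3.9 of \cite{ScheithauerWeil} and Section 3 of \cite{ScheithauerClassification}. The lattices for which the formula returns $\dim S_{1+n/2,\rho_L^*}=0$ are exactly the fifteen listed. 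Finally, for each of the fifteen I would exhibit an explicit lattice realizing the genus: in rank $\geq 6$ one takes the appropriate root lattice $R(-1)$ (one of $D_4,E_6,E_8,A_6$) or unimodular lattice (for $N=1$) summed with copies of $\II_{1,1}$ and $\II_{1,1}(p)$, and checks by a direct computation of the discriminant form of the scaled hyperbolic planes that the genus symbol matches; for $N=1$, $n\in\{10,18,26\}$ the relevant even unimodular lattices of signature $(2,n)$ exist and are unique.

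The main obstacle is the $n=4$ case: the bound $|L'/L|<3277$ (or $N<101$) leaves a genuinely large search space, and although the dimension formula is explicit, verifying vanishing of the cusp space over all these discriminant forms is only feasible by machine. This is precisely the computation attributed to Opitz's Magma programme in the text; I would rely on it for the bulk enumeration, while the bounding arguments (Propositions 3.1--3.3 and the Lemma) reduce the problem to that finite check. A secondary point requiring care is the passage from "discriminant form plus signature" to "lattice": one must justify that within each surviving genus there is at least one even lattice (guaranteed since the genus is nonempty by construction from a valid genus symbol) and that the cusp-form condition is a genus invariant, so that calling a lattice "simple" is well defined on isomorphism classes; both follow from the fact that $\rho_L$ depends only on the discriminant form.
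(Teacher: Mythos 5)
Your proposal follows essentially the same route as the paper: Propositions 3.1--3.3 reduce the problem to a finite collection of discriminant forms, the remaining cases are settled by Opitz's Magma computation of $\dim S_{1+n/2,\rho_L^*}$ via the dimension formula, and the passage from genera back to lattices uses that each listed genus contains a single class (Conway--Sloane, Ch.~15, Cor.~22). The only caution is your opening claim that an even lattice of signature $(2,n)$ is determined up to isomorphism by its genus --- this is false in general --- but as you yourself note, simplicity depends only on the discriminant form and the signature, so the classification is really one of genera and the one-class fact is needed only for the final list, exactly as in the paper.
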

	
		Note that each genus in the above list contains exactly one isomorphism class (see \cite{ConwaySloane}, chapter 15, Corollary 22).
		
	\section{Existence of automorphic products of singular weight}\label{sec:Existence}
Let $L$ be an even lattice of signature $(2,n)$,$n \geq 4$ even, and let $F$ be a weakly holomorphic modular form of weight $k=1-n/2$ for the Weil representation of $L$ with $c(\gamma,m)\in\Z$ for $m<0$. Borcherds' singular theta correspondence (see \cite{Borcherds},\cite{BruinierBorcherdsProducts}) sends $F$ to a meromorphic function $\Psi$ on the Grassmannian $\Gr(L)$ (consisting of the 2-dimensional positive definite subspaces of $L\otimes\R$) transforming as an automorphic form for the discriminant kernel $\Gamma(L)$ of weight $c(0,0)/2$. Moreover, if the coefficients $c(\gamma,m)$ of $F$ are non-negative for $m<0$, then $\Psi$ will be holomorphic. 

In order to construct such functions $\Psi$, we have to find suitable vector valued modular forms $F$, which in turn are given by their principal parts. A finite sum as in $\eqref{principalpart}$ is the principal part of a vector valued modular form $F$ of weight $k=1-n/2$ for $\rho_L$ if and only if $c(\gamma,m)=c(-\gamma,m)$ for all $\gamma$ and $m$ and
\begin{align}\label{obstruction}
\sum_{\gamma\in D}\sum_{m<0}c(\gamma,m)a(\gamma,-m)=0
\end{align}
for every cusp form $\sum_{\gamma,m}a(\gamma,m)e(m\tau)\e_\gamma$ of weight $1 + n/2$ for the dual Weil representation (cf. Theorem 1.17 in \cite{BruinierBorcherdsProducts}).

We want $\Psi$ to have singular weight $n/2-1$, i.e. we want the coefficient $c(0,0)$ of $F$ to be $n-2$. The weight of $\Psi$ is given by
 \begin{align}\label{weightautomorphicproduct}
	 -\frac{1}{4}\sum_{\gamma \in D}\sum_{m < 0}c(\gamma,m)q(\gamma,-m)
	 \end{align}
	 where the $q(\gamma,-m)$ are the coefficients of a vector valued Eisenstein series of weight $1 + n/2$ for the dual Weil representation $\rho_{L}^{*}$ (cf. \cite{BruinierKuss}, Theorem 5). In the case of square free level Scheithauer has calculated the Eisenstein coefficients (see \cite{ScheithauerClassification}, Theorem 7.1).
	One can now try to adjust the coefficients $c(\gamma,m)$ for $m < 0$ in order to obtain automorphic products of singular weight. If $L$ is simple, the only conditions on the principal part are $c(\gamma,m) = c(-\gamma,m)$ and $c(\gamma,m) \in \Z$ for $m < 0$. We find the following automorphic products of singular weight:
	
%	The general formula is quite lengthy, so we will not reproduce it here but refer the reader to \cite{ScheithauerClassification}, Theorem 7.1. One can considerably simplify these formulas for the twelve simple lattices separately and then use elementary estimations to show Theorem \ref{SingularWeight} cas5e by case. For lattices of odd prime level the simplified formula is written down in \cite{Hagemeier}, Proposition 5.31.
	
\begin{theorem}\label{thm:ProdukteAufEinfachenGittern}Automorphic products (coming from vector valued modular forms with non-negative principal part) of singular weight for simple lattices of square free level only exist in the following cases:
		\begin{itemize}
			\item $\II_{2,26}$: The singular weight is $12$ and the corresponding modular form $F$ has principal part $e(-\tau)$.
			\item $\II_{2,6}(2_{\II}^{-6})$: The singular weight is $2$ and the corresponding modular form $F$ has principal part $e(-\tau/2)\e_{\gamma}$ for some $\gamma \in D$ with $Q(\gamma) = 1/2 \mod 1$.
			\item $\II_{2,10}(2_{\II}^{+2})$: The singular weight is $4$ and the corresponding modular form $F$ has principal part $e(-\tau/2)\e_{\gamma}$
			for some $\gamma \in D$ with $Q(\gamma) = 1/2 \mod 1$.
			\item $\II_{2,4}(3^{+5})$: The singular weight is $1$ and the corresponding modular form $F$ has principal part $e(-\tau/3)\e_{\gamma}+ e(-\tau/3)\e_{-\gamma}$ for some $\gamma \in D$ with $Q(\gamma) = 2/3 \mod 1$.
		\end{itemize}
	\end{theorem}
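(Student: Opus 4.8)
The plan is to go through the list of $15$ simple lattices from Theorem~\ref{EinfacheGitter} one by one and, for each, determine exactly which principal parts of a weakly holomorphic form $F$ of weight $k = 1 - n/2$ with non-negative integral coefficients produce an automorphic product of singular weight. Since every lattice in the list is simple, the obstruction space \eqref{obstruction} is trivial, so a finite sum of the shape \eqref{principalpart} is the principal part of such an $F$ precisely when $c(\gamma,m) = c(-\gamma,m)$ for all $\gamma, m$ and $c(\gamma,m) \in \Z_{\geq 0}$ for $m < 0$. Thus there are no arithmetic obstructions to creating $F$; the constraint is purely numerical and comes from the two required identities: the constant term condition $c(0,0) = n - 2$ and the weight condition that \eqref{weightautomorphicproduct} equals the singular weight $n/2 - 1$, i.e.
\begin{equation*}
-\frac14 \sum_{\gamma \in D} \sum_{m < 0} c(\gamma,m)\, q(\gamma,-m) = \frac{n}{2} - 1 = \frac{c(0,0)}{2}.
\end{equation*}

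First I would assemble, for each of the $15$ lattices, the relevant Eisenstein coefficients $q(\gamma,-m)$ using Scheithauer's formula (\cite{ScheithauerClassification}, Theorem 7.1), which is available in closed form because the level is square free. The key observation is that $q(\gamma,m)$ for $m < 0$ is positive and grows with $|m|$ and with the ``depth'' of $\gamma$ in the discriminant form; combined with the non-negativity of the $c(\gamma,m)$, the weight equation becomes a finite enumeration problem. Concretely, I would note that $c(0,0) = n-2$ forces $\sum_{\gamma,m<0} c(\gamma,m) q(\gamma,-m) = 4(n/2-1) = 2(n-2)$, and then check, for each admissible pair $(\gamma,m)$ with $m<0$, whether $q(\gamma,-m)$ is small enough that a single term (or a symmetric pair $c(\gamma,m) = c(-\gamma,m)$) can fit inside the budget $2(n-2)$ while simultaneously, via the $S$- and $T$-transformation behaviour of $\rho_L$, forcing $c(0,0)$ to take exactly the value $n-2$ rather than something larger. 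In all but the four listed cases the smallest available $q(\gamma,-m)$ will already be too large, or the induced value of $c(0,0)$ will be wrong, ruling out the lattice; in the four surviving cases one pins down that the only possibility is the stated minimal principal part.

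For the four surviving lattices I would verify directly that the claimed principal part does give the singular weight: for $\II_{2,26}$ the principal part $e(-\tau)$ is the classical $1/\Delta$ input with $c(0,0) = 24 = n-2$ and weight $12$; for $\II_{2,6}(2_{\II}^{-6})$ and $\II_{2,10}(2_{\II}^{+2})$ the input $e(-\tau/2)\e_\gamma$ with $Q(\gamma) \equiv 1/2$ yields $c(0,0) = 4$ and $8$ respectively and weights $2$ and $4$; and for $\II_{2,4}(3^{+5})$ the symmetrized input $e(-\tau/3)(\e_\gamma + \e_{-\gamma})$ with $Q(\gamma)\equiv 2/3$ yields $c(0,0) = 2$ and weight $1$. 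Each of these is a short computation once the Eisenstein coefficient $q(\gamma, -m)$ and the value $c(0,0)$ (read off from the $q(0,0)$-normalized constant term, or from the eta-product realizations in Theorem~\ref{thm:Konstruktion}) are in hand. That these are \emph{all} such products then follows from the exhaustive case check.

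The main obstacle I anticipate is the bookkeeping: one must handle roughly $15$ discriminant forms, and for the larger ones (e.g.\ the three unimodular lattices $\II_{2,10}, \II_{2,18}, \II_{2,26}$, where $D$ is trivial but $n$ is large, and the level $6$ lattices where $D$ has two Jordan blocks) one has to argue that no combination of the finitely many negative-index coefficients can hit the weight target while keeping $c(0,0) = n-2$ — in particular ruling out $\II_{2,10}$ and $\II_{2,18}$, where the constant term of the Eisenstein series forces $c(0,0)$ away from $n-2$ for any non-negative principal part, and ruling out the smaller-signature level $2,3,5,6,7$ lattices where $q(\gamma,-m)$ is already too big. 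Getting the Eisenstein coefficients right for the $2_{\II}^{\pm}$-type and mixed-level components, and being careful that the symmetry condition $c(\gamma,m)=c(-\gamma,m)$ sometimes forces the coefficient budget to be spent in pairs (as in the $\II_{2,4}(3^{+5})$ case), are the places where errors are most likely to creep in.
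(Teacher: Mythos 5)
Your overall strategy coincides with the paper's: use simplicity to kill the obstruction \eqref{obstruction}, so that the only constraints are symmetry, integrality and non-negativity of the principal part, and then turn the singular weight condition into a numerical condition on the Eisenstein coefficients $q(\gamma,-m)$ via \eqref{weightautomorphicproduct}, checked lattice by lattice against Theorem~\ref{EinfacheGitter}. (Two small remarks: the conditions ``$c(0,0)=n-2$'' and ``weight $=n/2-1$'' are not independent, since the weight of $\Psi$ \emph{is} $c(0,0)/2$ and is computed by \eqref{weightautomorphicproduct}, so there is only one equation to satisfy; and to make ``the smallest available $q(\gamma,-m)$ is already too large'' rigorous for all of the infinitely many $m<0$ you need a uniform lower bound on $|q(\gamma,-m)|$ growing in $m$, which the paper gets from the divisor-sum estimate \eqref{DivisorSumEstimate}.)

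There is, however, one genuine gap. Your key premise --- that every relevant Eisenstein coefficient is nonzero and grows in absolute value with $|m|$, so that each admissible $(\gamma,m)$ consumes a positive chunk of a finite budget --- is false. For $\II_{2,4}(3^{+5})$ one has $q(0,m)=0$ for all $m\equiv 2\bmod 3$, because $\chi(m/d)=-\chi(d)$ makes the two divisor sums cancel. Consequently, adding an arbitrary non-negative coefficient $c(0,m)$ for $m<0$, $m\equiv 2\bmod 3$, to the principal part does not change the weight at all, and your budget argument cannot rule such terms out; taken literally, it would leave infinitely many candidate principal parts of singular weight, contradicting the uniqueness claim in the statement. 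The paper closes this by observing that there are no vectors $x\in L$ with $q(x)\equiv 2\bmod 3$, so these extra terms change neither the weight nor the divisor of $\Psi$, hence only rescale it by a constant; one may therefore normalize these coefficients to zero. You need this (or an equivalent) argument wherever a coefficient $q(\gamma,-m)$ vanishes, and you should check for such vanishing in each of the fifteen cases rather than assuming it away.
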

\begin{proof}
We need the formula for the coefficients of the Eisenstein series. This is quite lengthy, so we will not reproduce it here but refer the reader to \cite{ScheithauerClassification}, Theorem 7.1. These formulas can be considerably simplified for each of the simple lattices separately. For lattices of odd prime level the simplified formula is written down in \cite{Hagemeier}, Proposition 5.31.
We will repeatedly make use of the estimate
\begin{align}\label{DivisorSumEstimate}
	m^{k}(2-\zeta(k)) \leq \sum_{d|m}a_{d}d^{k} \leq m^{k}\zeta(k)
	\end{align}
	which holds for $a_{d} \in \{-1,0,1\}, a_{m} = 1, k \geq 2$. Here $\zeta$ is the Riemann zeta function.
	The proof is similar for each of the simple lattices so we will only show some of the necessary calculations.

	For example, let $L$ be the simple lattice of level $3$, signature $(2,4)$ and discriminant group $D \cong (\mathbb{Z}/3\mathbb{Z})^{5}$. The Eisenstein coefficient $q(\gamma,m)$ for $m > 0$, $m\equiv - Q(\gamma) \mod 1$ is given by
		\begin{align*}
		q(\gamma,m) = -2\sum_{d|3m}\chi(3m/d)d^{2}
		\end{align*}
		for $\gamma \neq 0$ and
		\begin{align*}
		q(0,m) = -18\sum_{d|m}\chi(m/d)d^{2} - 18\sum_{d|m}\chi(d)d^{2}
		\end{align*}
		for $\gamma = 0$, where $\chi(d) = \left(\frac{d}{3} \right)$ is the Legendre symbol. We list the first few coefficients:
		$$
		\begin{array}{c|c|c|c|c|c|c|c|c|c|c|c|c}		
		m & 1/3 & 2/3 & 1 & 4/3 & 5/3 & 2 & 7/3 & 8/3 & 3 & 10/3 & 11/3 & 4 \\
		\hline
		q(\gamma,m) & -2 & -6 & -18 & -26 & -48 & - 54 & -100 & -102 & -162 & -144 & -240 & -234 \\
		q(0,m) &  &  & -36 &  &  & 0  & & & -180 & & & -468
		\end{array}
		$$	
		If we choose a vector valued modular form of weight $-1 = 1-n/2$ with principal part $e(-\tau/3)\e_{\gamma} + e(-\tau/3)\e_{-\gamma}$ for some $\gamma \in D$ with $Q(\gamma) \equiv -1/3 \mod 1$, then the weight of its Borcherds product will be
		\[
		-\frac{1}{4}(c(\gamma,-1/3)q(\gamma,1/3) + c(-\gamma,-1/3)q(-\gamma,1/3)) = 1
		\]
		which is the singular weight for this lattice. To see that this is essentially the only possibility to obtain singular weight, we will show that 
		\begin{enumerate}
			\item $q(\gamma,m) < -2$ for $\gamma \neq 0$ and $m \geq \frac{2}{3}$ with $m = -Q(\gamma )\mod 1$,
			\item $q(0,m) = 0$ for $m \equiv 2 \mod 3$,
			\item $q(0,m) < -4$ for $m \equiv 0,1 \mod 3$.
		\end{enumerate}
		By (\ref{DivisorSumEstimate}) we can estimate
		\begin{align*}
		q(\gamma,m) \leq - 2(3m)^{2}(2-\zeta(2))
		\end{align*}
		for $\gamma \neq 0$, which is smaller than $-2.8$ for $m \geq \frac{2}{3}$.
		
		For $m \equiv 2 \mod 3$ we have $\chi(m/d) = \chi(m)\chi(d) = -\chi(d)$ and hence $q(0,m) = 0$ in this case. This means the following: Suppose we already have a vector valued modular form $F$ which lifts to an automorphic product $\Psi$ of singular weight. If we alter the coefficient $c(0,m)$ for $m < 0, m \equiv 2 \mod 3$, the weight formula (\ref{weightautomorphicproduct}) shows that the weight of $\Psi$ will not change. Moreover, there are no vectors $x \in L$ with $q(x) = 2\mod 3$, so the zeros of $\Psi$ will also remain the same. Hence $\Psi$ will only vary by a constant factor if we manipulate the coefficients $c(0,m)$ for $m < 0, m\equiv 2 \mod 3$. Therefore we may as well choose $c(0,m) = 0$ for $m < 0, m\equiv 2 \mod 3$.
		
		For $m \equiv 1\mod 3$ we get, using (\ref{DivisorSumEstimate}):
		\begin{align*}
		q(\gamma,m) = -36\sum_{d|m}\chi(m/d)d^{2} \leq -36m^{2}(2-\zeta(2)) < -12.
		\end{align*}
		Now let $m \equiv 0 \mod 3$. For every $d \mid m$ at least one of $\chi(d)$ or $\chi(m/d)$ is zero. Thus the two sums in $q(0,m)$ may be combined in the form
		\begin{align*}
		q(0,m) = -18\sum_{d|m}a_{d}d^{2}
		\end{align*}
		with $a_{d} \in \{-1,0,1\}, a_{m} = 1$. Estimating as above, we get
		\begin{align*}
		q(0,m) \leq -18m^{2}(2-\zeta(2)) < -6.
		\end{align*}
		This shows that there is essentially only one possibility to obtain an automorphic product of singular weight for $L$, at least if we restrict to non-negative principal parts.
		
Now let $L$ be the simple lattice of level $6$, signature $(2,4)$ and discriminant group $D \cong (\mathbb{Z}/2\mathbb{Z})^{4} \times \mathbb{Z}/3\mathbb{Z}$.
We only estimate the formula for $q(0,m)$ since the calculations for $q(\gamma,m)$ with $\gamma \neq 0$ are very similar to the ones above. The formula from \cite{ScheithauerClassification}, Theorem 7.1, for the coefficient $q(0,m)$ of the Eisenstein series simplifies to
		\begin{align*}
	&-\sum_{d|m}\left(36\chi(m/d) - 18\psi(m/d)(-1)^{d}+4\left( \frac{m/d}{2}\right)^{2}\psi(d) -2\psi(d)(-1)^{d}\right)d^{2}.
	\end{align*} 
	where $\psi(d) = \left( \frac{d}{3}\right),  \chi(d) = \left( \frac{d}{12}\right)= \left( \frac{d}{2}\right)^{2}\left( \frac{d}{3}\right)$ are Kronecker symbols. For odd $m$ the formula becomes
	\begin{align*}
	q(0,m) &= -\sum_{d|m}54\psi(m/d)d^{2}-6\sum_{d|m}\psi(d)d^{2}
	\end{align*}
	which can be estimated by
	\begin{align*}
	q(0,m) &\leq - 54(2-\zeta(2))m^{2} + 6\zeta(2)m^{2} < -9m^{2}.
	\end{align*}

	For even $m$ the coefficient $q(0,m)$ is of the form
	\begin{align*}
	q(0,m) = -18\sum_{d|m}a_{d}\psi(m/d)d^{2} - 2\sum_{d|m}a_{d}\psi(d)d^{2}
	\end{align*}
	where $a_{d}$ equals $1$ if $d$ is odd or $\left(\frac{m/d}{2}\right)^{2} = 1$, and $-1$ otherwise. We can estimate
	\begin{align*}
	q(0,m) \leq -18(2-\zeta(2))m^{2} + 2\zeta(2)m^{2} < -3m^{2}
	\end{align*}
	for even $m$. We see that $q(0,m) < -9$ for all $m \in \mathbb{N}$. By (\ref{weightautomorphicproduct}), the weight of an automorphic product coming from a vector valued modular form which has a positive Fourier coefficient $c(0,m) > 0$ for some $m < 0$ must be bigger than $\frac{9}{4}c(0,m) > 2$, i.e. bigger than the singular weight $1$.		
\end{proof}

	We remark that in the prime level case we can also show that allowing negative Fourier coefficients $c(\gamma,m) < 0$ in the principal part of a vector valued modular form $F$ does not lead to new holomorphic automorphic products of singular weight for the simple lattices. The proof is quite tedious as the coefficients $q(\gamma,m)$ of the Eisenstein series need to be estimated much more carefully. Moreover, the estimates for the two simple lattices of level $6$ seem intractable if we allow negative coefficients in the principal part. Hence, for simplicity, we restricted to non-negative principal parts.

%	\section{Eta products for $\Gamma_{1}(N)$}
%	
%		
%		
%		The Dedekind eta function is a holomorphic function on $\mathbb{H}$ defined by
%		$$
%		\eta(\tau) = e(\tau/24)\prod_{n \geq 1}(1-e(n\tau)).
%		$$
%		It transforms as
%		\begin{align*}
%		\eta(\tau + 1) = e(1/24)\eta(\tau), \qquad \eta\left( -\frac{1}{\tau}\right) = \sqrt{-i\tau}\eta(\tau)
%		\end{align*}
%		under the generators $T$ and $S$ of $\SL_{2}(\mathbb{Z})$. More generally, by a formula of Rademacher (see \cite[p. 163]{Rademacher}) we have for $M = \left( \begin{smallmatrix}a  & b \\ c & d \end{smallmatrix}\right) \in \SL_{2}(\mathbb{Z})$ with $c > 0$
%	$$
%	\eta(M\tau) = \varepsilon(M)\sqrt{c\tau + d} \ \! \eta(\tau)
%	$$
%	with
%	$$
%	\varepsilon(M) = \left\{
%	\begin{aligned}
%		&\bigg( \frac{d}{c}\bigg)e\big((-3c + bd(1-c^{2}) + c(a+d))/24\big), &&\text{ if $c$ odd,} \\
%		&\bigg( \frac{c}{d}\bigg)e\big((3d-3+ac(1-d^{2}) + d(b-c))/24\big), & & \text{ if $c$ even.}
%	\end{aligned}\right.
%	$$
%		
%	In \cite{BorcherdsReflectionGroups}, Theorem 6.2, Borcherds describes the construction of eta products for $\Gamma_{0}(N)$. A slight variation of the proof gives:
%		

	\section{Explicit construction of automorphic products of singular weight}
	
	The vector valued modular forms from Theorem \ref{thm:ProdukteAufEinfachenGittern} which lead to automorphic products of singular weight are implicitly given by their weights and principal parts. Their existence is guaranteed since the lattices are simple. We want to construct those vector valued modular forms more explicitly as lifts of elliptic modular forms, namely as lifts of eta products for $\Gamma_{1}(N)$.
	
	It follows from the action of $\Gamma_{1}(N)$ in the Weil representation that the component functions $f_{\gamma}$ of a vector valued modular form are weakly holomorphic elliptic modular forms for $\Gamma_{1}(N)$ and character $\chi_{\gamma}\left( M\right) = e(bQ(\gamma))$.
	Here weakly holomorphic means that $f_{\gamma}$ may have poles at the cusps. Conversely, such scalar valued modular forms $f$ of weight $k$ for $\Gamma_{1}(N)$ and character $\chi_{\gamma}$ may be lifted via

			\[
			F_{\Gamma_{1}(N),f,\gamma}(\tau) = \sum_{M \in \Gamma_{1}(N) \setminus \SL_{2}(\mathbb{Z})}f|_{k}M(\tau)\rho_{L}(M^{-1})\e_{\gamma} 
			\]
			to vector valued modular forms for $\rho_{L}$ of weight $k$ (see \cite{ScheithauerModularforms}, Theorem 3.1). To find suitable inputs we consider eta products:
	
	\begin{proposition}\label{EtaProducts}
		Let $N$ be a positive integer. Take integers $r_{\delta}$ for $\delta \mid N$ such that $\frac{N}{24}\sum_{\delta|N} \delta r_{\delta}$ and $\frac{N}{24}\sum_{\delta | N}r_{\delta}/\delta$ are integers and $\sum_{\delta|N}r_{\delta}$ is even. Then the eta product
		$$
		\prod_{\delta | N}\eta(\delta \tau)^{r_{\delta}}
		$$
		is a modular form for $\Gamma_{1}(N)$ of weight $k = \sum_{\delta|N}r_{\delta}/2$ and character
		$$
		\chi\left( \begin{pmatrix} a & b \\ c & d \end{pmatrix}\right) = e\left(b \frac{1}{24} \sum_{\delta|N}\delta r_{\delta} \right).
		$$
	\end{proposition}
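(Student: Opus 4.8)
The plan is to separate the (easy) analytic claims from the transformation law. Writing $g(\tau)=\prod_{\delta\mid N}\eta(\delta\tau)^{r_\delta}$, I would first note that $g$ is holomorphic and nowhere vanishing on $\mathbb{H}$ because $\eta$ is, and that at the cusp $\infty$ the product $\eta(\delta\tau)=e(\delta\tau/24)\prod_{n\ge1}(1-e(n\delta\tau))$ shows $g$ has an expansion in a fixed fractional power of $q=e(\tau)$ with only finitely many negative exponents, with the analogous statement at every other cusp after conjugating by a scaling matrix. Hence $g$ is holomorphic on $\mathbb{H}$ with at worst poles at the cusps, i.e.\ a weakly holomorphic modular form in the sense the paper uses for the component functions $f_\gamma$. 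The real content is the behaviour under $\Gamma_1(N)$.

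For that I would reduce everything to the multiplier system $v_\eta$ of $\eta$, which satisfies $\eta(M\tau)=v_\eta(M)(c\tau+d)^{1/2}\eta(\tau)$ for $M=\left(\begin{smallmatrix}a&b\\c&d\end{smallmatrix}\right)\in\SL_2(\Z)$ with $c>0$, where $v_\eta(M)=e\!\left(\tfrac{a+d}{24c}-\tfrac12 s(d,c)-\tfrac18\right)$ is a $24$-th root of unity and $s(d,c)$ is the Dedekind sum. The key observation is that for $M\in\Gamma_1(N)$ we have $N\mid c$, hence $\delta\mid c$ for every $\delta\mid N$, so that $\left(\begin{smallmatrix}\delta&0\\0&1\end{smallmatrix}\right)M=M_\delta\left(\begin{smallmatrix}\delta&0\\0&1\end{smallmatrix}\right)$ with $M_\delta:=\left(\begin{smallmatrix}a&\delta b\\c/\delta&d\end{smallmatrix}\right)\in\SL_2(\Z)$; therefore $\eta(\delta M\tau)=\eta\!\left(M_\delta(\delta\tau)\right)=v_\eta(M_\delta)(c\tau+d)^{1/2}\eta(\delta\tau)$ (the cases $c\le0$ reduce to $c>0$ after replacing $M$ by $-M$, and $c=0$ means $M=\pm T^n$, which is immediate from $\eta(\tau+1)=e(1/24)\eta(\tau)$). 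Taking $r_\delta$-th powers and multiplying over $\delta\mid N$ yields
\[
g(M\tau)=\Bigl(\textstyle\prod_{\delta\mid N}v_\eta(M_\delta)^{r_\delta}\Bigr)(c\tau+d)^{\frac12\sum_\delta r_\delta}\,g(\tau).
\]
Since $\sum_\delta r_\delta$ is even the exponent equals the integer $k$, and $\chi(M):=\prod_\delta v_\eta(M_\delta)^{r_\delta}$ is a root of unity; as $g$ has no zeros, $\chi$ is automatically a homomorphism $\Gamma_1(N)\to\C^\times$.

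The last step is to identify $\chi$ with the stated character $M\mapsto e\!\left(\tfrac b{24}\sum_\delta\delta r_\delta\right)$. Substituting the formula for $v_\eta(M_\delta)$ gives
\[
\chi(M)=e\!\left(\frac{a+d}{24c}\sum_\delta\delta r_\delta-\frac12\sum_\delta r_\delta\,s\!\left(d,\tfrac c\delta\right)-\frac18\sum_\delta r_\delta\right),
\]
and I would show this collapses to $e\!\left(\tfrac b{24}\sum_\delta\delta r_\delta\right)$ using Dedekind reciprocity — equivalently the integrality of Rademacher's $\Phi$-function, which follows from $\Delta=\eta^{24}$ being a weight-$12$ form for $\SL_2(\Z)$. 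This is exactly where the three hypotheses enter: $\sum_\delta r_\delta$ even makes $k$ an integer; $\tfrac N{24}\sum_\delta\delta r_\delta\in\Z$ is precisely the condition for $M\mapsto e\!\left(\tfrac b{24}\sum_\delta\delta r_\delta\right)$ to be well defined on $\Gamma_1(N)$, the upper-right entry being additive modulo $N$ along $\Gamma_1(N)$; and $\tfrac N{24}\sum_\delta r_\delta/\delta\in\Z$ forces $\chi$ to be trivial on $\left(\begin{smallmatrix}1&0\\N&1\end{smallmatrix}\right)$ (there, using $s(1,m)=\tfrac{(m-1)(m-2)}{12m}$, one computes $v_\eta(M_\delta)=e(-\tfrac N{24\delta})$), matching the claimed character at $b=0$. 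I expect this Dedekind-sum bookkeeping to be the only real obstacle. A quicker route, if one is willing to cite, is to take the transformation law of $\eta(\delta\tau)$ under $\Gamma_0(N)$ from the literature (Ligozat, Newman, or K\"ohler) and observe that on $\Gamma_1(N)$ the accompanying Jacobi-symbol factor $\left(\tfrac\cdot d\right)$ trivializes because $d\equiv1\bmod N$, leaving precisely the asserted weight, character, and integrality conditions.
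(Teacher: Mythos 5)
Your argument is correct in outline, but it follows a genuinely different route from the paper's. The paper first replaces $\Gamma_1(N)$ by a convenient generating set -- the matrices with $c>0$, $d>0$, $d\equiv 1\bmod 4$ (Borcherds, \emph{Reflection groups of Lorentzian lattices}, Lemma 5.1) -- and then verifies the transformation law only on those generators via Rademacher's formula; restricting to such matrices is what makes the multiplier computation "straightforward" there, since the eta multiplier then has a clean closed form and all sign/branch issues disappear. You instead work with an arbitrary $M\in\Gamma_1(N)$, using the (correct) conjugation $\left(\begin{smallmatrix}\delta&0\\0&1\end{smallmatrix}\right)M=M_\delta\left(\begin{smallmatrix}\delta&0\\0&1\end{smallmatrix}\right)$ to express the multiplier of $\prod_\delta\eta(\delta\tau)^{r_\delta}$ through $\prod_\delta v_\eta(M_\delta)^{r_\delta}$, and your sanity checks (the computation $v_\eta\bigl(\begin{smallmatrix}1&0\\N/\delta&1\end{smallmatrix}\bigr)=e(-N/24\delta)$, the well-definedness of $b\bmod N$) correctly locate where each hypothesis enters. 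This is the Newman--Ligozat route; its advantage is that it makes transparent why the two divisibility conditions are exactly what is needed, while the paper's approach buys a shorter verification at the cost of first proving the generation lemma. The one caveat: the step where the Dedekind-sum expression "collapses" to $e\bigl(\tfrac{b}{24}\sum_\delta\delta r_\delta\bigr)$ is the entire content of the proposition and you only gesture at it (note also that triviality on $\left(\begin{smallmatrix}1&0\\N&1\end{smallmatrix}\right)$ and the correct value on $T$ do not determine $\chi$, since these do not generate $\Gamma_1(N)$ for general $N$); this is, however, no less complete than the paper's own appeal to a "straightforward calculation analogous to Borcherds", and your fallback of citing Ligozat/K\"ohler and observing that the Jacobi-symbol factor trivializes on $\Gamma_1(N)$ because $d\equiv 1\bmod N$ is a perfectly sound way to close it.
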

	
	\begin{proof}
		The proof is very similar to that of Theorem 6.2 in \cite{BorcherdsReflectionGroups}, so we only give a short sketch: First, one shows (as in \cite{BorcherdsReflectionGroups}, Lemma 5.1) that $\Gamma_{1}(N)$ is generated by the matrices $\left(\begin{smallmatrix} a & b \\ c & d \end{smallmatrix}\right) \in \Gamma_{1}(N)$ with $c > 0, d > 0$ and $d \equiv 1 \mod 4$. Now it suffices to check that the eta product transforms correctly under these generators. Using Rademacher's formula for the transformation behaviour of $\eta$, this is a straightforward calculation analogously to the calculation in \cite{BorcherdsReflectionGroups}, Theorem 6.2.
	\end{proof}	
	
	We remark that Scheithauer also used eta products to construct vector valued modular forms for $\rho_{L}$ of weight $1-b^{-}/2$ leading to automorphic products of singular weight (see \cite{ScheithauerWeil}, for example). Moreover, eta products can be lifted to cusp forms of weight $1+n/2$ for $\rho_{L}^{*}$. In view of (\ref{obstruction}) this can sometimes be useful to show that certain vector valued modular forms with a desired principal part do not exist.
	
	In Theorem \ref{thm:ProdukteAufEinfachenGittern} we found four lattices which allow automorphic products of singular weight. Borcherds' automorphic product of singular weight $12$ for $\II_{2,26}$ is well known. The automorphic product for the lattice of level $2$, signature $(2,10)$ and discriminant group $(\mathbb{Z}/2\mathbb{Z})^{2}$ has already been described in \cite{Borcherds}, example 13.7, and \cite{ScheithauerClassification}, section 8, so we will concentrate on the two new automorphic products. It turns out that the input functions for the singular theta correspondence can be realized as eta products.
	
\begin{theorem}\label{thm:Konstruktion} \quad
\begin{enumerate}
\item
The vector valued modular form corresponding to the case $D=\II_{2,4}(3^{+5})$ can be obtained as $F=F_{f,\Gamma_1(3),\gamma}$  
where 
\[
f(\tau)=\frac{\eta(\tau)}{\eta(3\tau)^3}
\] 
and $\gamma\in D$ is an element of norm $Q(\gamma)\equiv -1/3\mod 1$. 
\item The vector valued modular form corresponding to the case $D=\II_{2,6}(2_{\II}^{-6})$ can be obtained as $F=F_{f,\Gamma_1(2),\gamma}$ where
\[
f(\tau)=\frac{\eta(\tau)^4}{\eta(2\tau)^8}
\] 
and $\gamma\in D$ is an element of norm $Q(\gamma)\equiv 1/2\mod 1$. 
\end{enumerate}
\end{theorem}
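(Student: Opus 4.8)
The plan is to verify that each $f$ is a weakly holomorphic modular form of weight $1-n/2$ and the correct character for $\Gamma_1(N)$, to lift it via the construction recalled before Proposition~\ref{EtaProducts}, and then to identify the principal part of the lift with that of the form from Theorem~\ref{thm:ProdukteAufEinfachenGittern}. Since a weakly holomorphic modular form of negative weight is determined by its principal part, this identification finishes the proof.

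First I would apply Proposition~\ref{EtaProducts}. For the first form take $N=3$ and $(r_1,r_3)=(1,-3)$, for the second take $N=2$ and $(r_1,r_2)=(4,-8)$. One checks in each case that $\tfrac{N}{24}\sum_{\delta}\delta r_\delta$ and $\tfrac{N}{24}\sum_{\delta}r_\delta/\delta$ are integers and that $\sum_\delta r_\delta$ is even, so $f$ is a modular form for $\Gamma_1(N)$ of weight $\tfrac12\sum_\delta r_\delta$, which is $-1=1-4/2$ in the first case and $-2=1-6/2$ in the second, i.e. exactly the weight $1-n/2$ required by the lifting construction. The character is $e\!\left(b\cdot\tfrac1{24}\sum_\delta\delta r_\delta\right)$, which works out to $e(-b/3)$ in the first case and $e(b/2)$ in the second; these are precisely $\chi_\gamma(M)=e(bQ(\gamma))$ for $\gamma$ of norm $-1/3$, resp. $1/2$, modulo $1$. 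Since $\eta$ is zero-free on $\mathbb H$ and the exponents $r_\delta$ are not all non-negative, $f$ is holomorphic on $\mathbb H$ with poles at some cusps, hence weakly holomorphic. By \cite{ScheithauerModularforms}, Theorem~3.1, the lift $F:=F_{f,\Gamma_1(N),\gamma}$ is then a weakly holomorphic vector valued modular form for $\rho_L$ of weight $1-n/2$.

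The heart of the proof is computing the principal part of $F$. Using Rademacher's transformation formula for $\eta$ (as in the proof of Proposition~\ref{EtaProducts}) one shows that $f$ has a pole only at the cusp $\infty$ of $\Gamma_1(N)$ — of order $\tfrac13$ with leading term $e(-\tau/3)$ in the first case, of order $\tfrac12$ with leading term $e(-\tau/2)$ in the second — and is holomorphic, in fact non-vanishing, at every other cusp. Therefore, in $F=\sum_{M\in\Gamma_1(N)\backslash\SL_2(\Z)}f|_{1-n/2}M\,\rho_L(M^{-1})\e_\gamma$, only the cosets $M$ with $M\infty$ equivalent to $\infty$ under $\Gamma_1(N)$ contribute to the principal part of $F$. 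For $N=2$ this is just the coset of the identity, giving $f\e_\gamma$; since every element of $\II_{2,6}(2_{\II}^{-6})$ is $2$-torsion, the principal part of $F$ is $e(-\tau/2)\e_\gamma$. For $N=3$ there are two such cosets, represented by $I$ and $-I$ (as $-I\notin\Gamma_1(3)$): the first gives $f\e_\gamma$, and for the second one computes $f|_{-1}(-I)=-f$ and $\rho_L(-I)\e_\gamma=e\!\left((b^--b^+)/4\right)\e_{-\gamma}=-\e_{-\gamma}$ for signature $(2,4)$, so its contribution is $(-f)(-\e_{-\gamma})=f\e_{-\gamma}$; hence the principal part of $F$ is $e(-\tau/3)\e_\gamma+e(-\tau/3)\e_{-\gamma}$. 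In both cases this coincides with the principal part of the corresponding form in Theorem~\ref{thm:ProdukteAufEinfachenGittern}, and by the uniqueness just mentioned $F$ is that form.

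The main obstacle is this third step: one has to pin down a set of coset representatives for $\Gamma_1(N)\backslash\SL_2(\Z)$, control the width-dependent (and fractional) local $q$-expansions of $f|_{1-n/2}M$ at each cusp in order to be sure that no pole appears outside $\infty$, and keep careful track of the scalars — the automorphy factor at $M=-I$ together with the phase of $\rho_L(-I)$ — so that for $N=3$ the two contributions add up to $\e_\gamma+\e_{-\gamma}$ rather than cancelling. Once the cusp combinatorics and the $\rho_L$-action are under control, the remaining verifications are routine.
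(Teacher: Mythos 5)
Your proposal is correct and follows essentially the same route as the paper: verify the weight and character of the eta product via Proposition \ref{EtaProducts}, check holomorphy at the cusp $0$ (the paper does this by explicitly computing $f|_kS$), and identify the principal part of the lift from the cosets fixing $\infty$ (the paper simply writes out the full lift over explicit coset representatives $\pm I,\pm S,\pm ST,\pm ST^{-1}$, resp. $I,S,ST$), concluding by uniqueness of negative-weight forms with given principal part. Your bookkeeping of the $-I$ contribution, $(-f)\cdot(-\e_{-\gamma})=f\e_{-\gamma}$, matches the paper's explicit formula $F=f\e_\gamma+f\e_{-\gamma}+(\text{holomorphic terms})$.
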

\begin{proof}\quad
\begin{enumerate}
\item
We may take 
	$$
	L = A_{2}(-1) \oplus I\!I_{1,1}(3) \oplus I\!I_{1,1}(3).
	$$
	Choose $\gamma \in D = L'/L$ with $Q(\gamma) \equiv -1/3 \mod 1$. By Theorem \ref{EtaProducts} the eta product
	$$
	f(\tau) = \frac{\eta(\tau)}{\eta(3\tau)^{3}} = q^{-1/3} - q^{2/3} - q^{5/3} + 3q^{8/3} - 3q^{11/3} - 2q^{14/3} + 9q^{17/3} + \dots
	$$
	(here $q := e^{2\pi i \tau}$) is a modular form for $\Gamma_{1}(3)$ of weight $-1$ and character $\chi_{\gamma}(M) = e(-b/3)$. Hence we can use it as an input for the $\Gamma_{1}(3)$-lift. The group $\Gamma_{1}(3)$ has two classes of cusps which are represented by $\infty$ and $0$. The expansion of $f$ at the cusp $0$ of $\Gamma_{1}(3)$ is
	\begin{align*}
	f|_{-1}S(\tau) &= 3\sqrt{3}i\frac{\eta(\tau)}{\eta(\tau/3)^{3}} \\
	&= 3\sqrt{3}i(1 + 3q^{1/3} + 9q^{2/3} + 21q + 48q^{4/3} + 99q^{5/3} + 198q^{2} + \dots) \\
	&= g_{0} + g_{1/3} + g_{2/3}.
	\end{align*}
	Here $g_{j/3}$ has only the Fourier coefficients with $q$-exponents equivalent to $j/3 \mod 1$.
	
	 Using the $8$ matrices $\pm I, \pm S, \pm ST, \pm ST^{-1}$ as representatives for $\Gamma_{1}(3)\setminus \SL_{2}(\mathbb{Z})$ or the formula from \cite{ScheithauerModularforms}, Theorem 3.7, we obtain the $\Gamma_{1}(3)$-lift of $f$ on $\e_{\gamma}$ as
	\begin{align*}
	F = F_{\Gamma_{1}(3),f,\gamma} &= f\e_{\gamma} + f\e_{-\gamma} - \frac{3i}{\sqrt{243}}\sum_{\beta \in D}[e((\beta,\gamma)) + e(-(\beta,\gamma))]g_{Q(\beta)}\e_{\beta}.
	\end{align*}
	Thus $F$ is a vector valued modular form for $\rho_{L}$ of weight $-1$ and principal part 
	$$
	e(-\tau/3)\e_{\gamma} + e(-\tau/3)\e_{-\gamma}.
	$$
	Its constant coefficient is $c(0,0) = 2$, hence its Borcherds product $\Psi$ has singular weight $1$. 
\item 
 We may take
	\begin{align*}
	L = D_{4}(-1) \oplus \II_{1,1}(2) \oplus \II_{1,1}(2).
	\end{align*}
	Let $\gamma \in D$ with $Q(\gamma) \equiv 1/2 \mod 1$. The eta product
	\begin{align*}
	f(\tau) = \frac{\eta(\tau)^{4}}{\eta(2\tau)^{8}} = q^{-1/2} -4q^{1/2} + 10q^{3/2} - 24q^{5/2} + 55q^{7/2}-116q^{9/2} + \dots
	\end{align*}
	is a modular form for $\Gamma_{1}(2)$ of weight $-2$ and character $\chi_{\gamma}(M) = e(-b/2)$. Note that $\Gamma_{1}(2)$ has two classes of cusps represented by $\infty$ and $0$. The expansion of $f$ at $0$ is given by
	\begin{align*}
	f|_{-2}S(\tau) &= -16\frac{\eta(\tau)^{4}}{\eta(\tau/2)^{8}} = -16(1 + 8q^{1/2} + 40q + 160q^{3/2} + 552q^{2} + \dots) \\
	&= g_{0} + g_{1/2}.
	\end{align*}
	Using the three matrices $I,S,ST$ as representatives for $\Gamma_{1}(2)\setminus \SL_{2}(\mathbb{Z})$, the $\Gamma_{1}(2)$-lift of $f$ at $\e_{\gamma}$ becomes
	\begin{align*}
	F = F_{\Gamma_{1}(2),f,\gamma} = f\e_{\gamma} -\frac{1}{4}\sum_{\beta \in D}e((\beta,\gamma))g_{Q(\beta)}\e_{\beta}.
	\end{align*}
	Therefore $F$ is a vector valued modular form for $\rho_{L}$ of weight $-2$ with principal part $e(-\tau/2)\e_{\gamma}$. Its constant coefficient is $4$, so the Borcherds product $\Psi$ of $F$ has singular weight $2$.
\end{enumerate}
\end{proof}
	
 We compute Fourier and product expansions of the two automorphic products at different cusps. For the general product expansion of an automorphic product and the notation in this context the reader should consult \cite{BruinierBorcherdsProducts}.

	For the first of the new automorphic products (corresponding to item 1 in the previous theorem) let $L=A_{2}(-1) \oplus I\!I_{1,1}(3) \oplus I\!I_{1,1}(3)$. Let $z \in L$ be a cusp, i.e. a primitive isotropic vector in $L$. Then $z$ has level $3$ (i.e. $\langle z,L\rangle = 3\mathbb{Z}$), since otherwise $L$ would split a hyperbolic plane $\II_{1,1}$ which is not possible. We compute the product expansion of $\Psi_{z}$ (cf. \cite{BruinierBorcherdsProducts}, Theorem 3.22): Choose another primitive isotropic vector $\zeta \in L$ with $\langle z,\zeta \rangle = 3$ such that $L$ decomposes as an orthogonal direct sum $L = K \oplus \II_{1,1}(3)$ where the hyperbolic plane $\II_{1,1}(3)$ is generated by $z$ and $\zeta$. Then $K = A_{2}(-1)\oplus \II_{1,1}(3)$ is a lattice of signature $(1,3)$.  Let $z' = \zeta/3 \in L'$. Since $\zeta$ is orthogonal to $K$ the projection $\zeta_{K}$ is $0$ and $p(\lambda) = \lambda_{K}$ for $\lambda \in L_{0}'$. For $\lambda \in K'$ a system of representatives of the $\delta \in L_{0}'/L$ with $p(\delta) = \lambda + K$ is therefore given by $\lambda, \lambda + z/3, \lambda + 2z/3 \in L_{0}'$. We obain the product expansion
		\begin{align*}
		\Psi_{z}(Z) = e\big((\rho,Z) \big) \prod_{\substack{\lambda \in K' \\ (\lambda,W) > 0}}&(1-e((\lambda,Z)))^{c(\lambda+L,q(\lambda))} \\
		\times &(1-e(1/3)e((\lambda,Z)))^{c(\lambda + z/3+L,q(\lambda))} \\
		\times &(1-e(2/3)e((\lambda,Z)))^{c(\lambda + 2z/3+L,q(\lambda))},
		\end{align*}
		where $W$ is a Weyl chamber of the positive cone $C$ of $K \otimes \mathbb{R}$ (which is one of the connected components of the set of positive norm vectors in $K \otimes \R$), $\rho$ is the corresponding Weyl vector and $c(\delta,n)$ are the coefficients of the vector valued modular form $F$.
		
		Now we determine Fourier expansions of $\Psi_{z}$ at different cusps. We fix $z$ and consider different choices of $\gamma$.
		
		First, let $\gamma = z/3 - \zeta/3 +L$. Then $Q(\gamma) \equiv - 1/3 \mod 1$ and $(\gamma,z) \equiv 1 \mod 3$. Moreover, we have $(\gamma,\lambda) \equiv 0 \mod 1$ for all $\lambda \in K'$. Let $F$ be the vector valued modular form for $\rho_{L}$ of weight $-1$ and principal part $e(-\tau/3)\e_{\gamma}$ $+ e(-\tau/3)\e_{-\gamma}$ as constructed above and let $\Psi$ be the corresponding automorphic product. We obtain the following Fourier expansion of $\Psi_{z}$:
		
		\begin{theorem}\label{FourierEntwicklungNot0}
		Let $z, \zeta \in L$ be primitive isotropic vectors with $\langle z,\zeta \rangle = 3$ such that $L = K \oplus \II_{1,1}(3)$ as an orthogonal direct sum where the hyperbolic plane is generated by $z$ and $\zeta$, and $K = A_{2}(-1) \oplus \II_{1,1}(3)$. Choose $\gamma = z/3 -\zeta/3 + L \in L'/L$. Define
		\begin{align*}
		f_{0}(\tau) &= \frac{1}{3\sqrt{3}i}f|_{-1}S(\tau) = \frac{\eta(\tau)}{\eta(\tau/3)^{3}} \\ 
		&= 1 + 3q^{1/3} + 9q^{2/3} + 21q + 48q^{4/3} + 99q^{5/3}+198q^{2} + \dots
		\end{align*}
		Let $C$ be the positive cone in $K \otimes \mathbb{R}$ and $K'^{+} = (K' \cap \overline{C}) \setminus \{0\}$. Then 
		\begin{align*}
		\Psi_{z}(Z) &= \prod_{\lambda \in K'^{+}}\frac{\big(1-e((\lambda,Z))\big)^{3[f_{0}](q(\lambda))}}{\big(1-e(3(\lambda,Z))\big)^{[f_{0}](q(\lambda))}} = 1 + \sum_{\lambda \in K'^{+}}c(\lambda)e((\lambda,Z)),
		\end{align*}
		where $c(\lambda)$ is $0$ unless $\lambda = n\mu$ for some primitive isotropic $\mu \in K'^{+}$, in which case $c(\lambda)$ equals the coefficient at $q^{n}$ in
		$$
		\frac{\eta(\tau)^{3}}{\eta(3\tau)} = 1 -3q + 6q^{3} - 3q^{4} - 6q^{7} +6q^{9} + 6q^{12} +  \dots
		$$
	\end{theorem}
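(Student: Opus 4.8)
The plan is to specialize the general product expansion of $\Psi_z$ recalled above (from \cite{BruinierBorcherdsProducts}, Theorem 3.22) to the data $(z,\zeta,K,\gamma)$ and then read off the Fourier expansion. The first step is to compute the Fourier coefficients of $F$ occurring in the product. From the explicit shape $F=F_{\Gamma_1(3),f,\gamma}$ of Theorem \ref{thm:Konstruktion} (together with $f|_{-1}S=3\sqrt3\,i\,f_0$ and $3i/\sqrt{243}=i/(3\sqrt3)$) one sees that for $\beta\in D$ with $\beta\neq\pm\gamma$ the $\e_\beta$-component of $F$ is $\bigl(e((\beta,\gamma))+e(-(\beta,\gamma))\bigr)\,[f_0]_{Q(\beta)}$, where $[f_0]_{c}$ denotes the part of $f_0(\tau)=\eta(\tau)/\eta(\tau/3)^3$ supported on exponents $\equiv c\bmod 1$. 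Since $\gamma=z/3-\zeta/3+L$ and $K$ is orthogonal to $\langle z,\zeta\rangle$, for $\lambda\in K'$ and $j\in\{0,1,2\}$ one has $Q(\lambda+jz/3+L)=q(\lambda)\bmod 1$, the pairing $(\lambda+jz/3+L,\gamma)$ is $j/3\bmod 1$ (up to a harmless sign), and $\lambda+jz/3+L$ is never $\pm\gamma$; hence
\[
c(\lambda+jz/3+L,\,q(\lambda))=2\cos(2\pi j/3)\,[f_0](q(\lambda)),
\]
which is $2[f_0](q(\lambda))$ for $j=0$ and $-[f_0](q(\lambda))$ for $j=1,2$, with $[f_0](m)$ the coefficient of $q^m$ in $\eta(\tau)/\eta(\tau/3)^3$ (in particular $[f_0](m)=0$ for $m<0$).

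Substituting these into the product expansion, the three factors attached to a fixed $\lambda$ collapse, via $(1-X)(1-e(1/3)X)(1-e(2/3)X)=1-X^3$, to $(1-e((\lambda,Z)))^{3[f_0](q(\lambda))}(1-e(3(\lambda,Z)))^{-[f_0](q(\lambda))}$. Because $[f_0](m)=0$ for $m<0$ only $\lambda$ in the closed positive cone contribute, and $(\lambda,W)>0$ singles out one of $\pm\lambda$, so the index set becomes $K'^+$. Finally the Weyl vector $\rho$ must vanish: $\Psi$ has singular weight, so by the standard fact that a nonzero holomorphic form of singular weight has Fourier coefficients supported on the isotropic cone, and since the factor of each isotropic $\lambda\in K'^+$ has exponent $[f_0](0)=1$, the expansion $e((\rho,Z))\prod_{\lambda\in K'^+}(\cdots)$ has a nonzero coefficient at $\rho+\mu$ for every primitive isotropic $\mu\in K'^+$; this forces $q(\rho+\mu)=(\rho,\mu)=0$ for all such $\mu$, hence (the isotropic vectors spanning $K\otimes\mathbb{R}$) $\rho=0$. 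This establishes the first displayed equality and that the constant term is $1$.

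For the Fourier coefficients, singularity of $\Psi$ gives $c(\lambda)=0$ unless $q(\lambda)=0$, so only positive multiples $\lambda=n\mu$ of primitive isotropic $\mu\in K'^+$ remain. Here the Lorentzian signature $(1,3)$ of $K$ is the key point: its maximal totally isotropic subspaces are one-dimensional, so any decomposition $n\mu=\sum_i a_i\lambda_i$ with $\lambda_i\in K'^+$ forces all $\lambda_i$ proportional to $\mu$. Hence the coefficient of $e(n(\mu,Z))$ in $\Psi_z$ comes only from the factors attached to the multiples $a\mu$ ($a\geq 1$); as $q(a\mu)=0$, these multiply to $\prod_{a\geq 1}(1-e(a(\mu,Z)))^3(1-e(3a(\mu,Z)))^{-1}$, and with $q=e((\mu,Z))$ the coefficient of $q^n$ in $\prod_{a\geq1}(1-q^a)^3/(1-q^{3a})=\eta(\tau)^3/\eta(3\tau)$ is the claimed value, the same for every $\mu$.

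Everything except the singular-weight input is bookkeeping: the eta-quotient identities, and keeping the identifications in $L'/L$ straight — in particular $(\gamma,z)=-1$, $(\gamma,K')\subseteq\mathbb{Z}$, and $\lambda+jz/3+L\neq\pm\gamma$, all immediate from $\gamma=z/3-\zeta/3+L$ and $K\perp\langle z,\zeta\rangle$. The one substantive ingredient is the vanishing of the Fourier coefficients of $\Psi$ on non-isotropic vectors, used both to force $\rho=0$ and to cut the expansion down to isotropic directions; without it the product would a priori carry many further coefficients, so this is where I expect the real content of the argument to sit. (The negative exponents $-[f_0](q(\lambda))$ only affect the interior of the domain and cause no trouble for the power-series expansion at the cusp.)
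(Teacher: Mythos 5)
Your argument is correct and follows the paper's proof in all essentials: the same computation of the exponents $c(\lambda+jz/3+L,q(\lambda))=2\cos(2\pi j/3)\,[f_0](q(\lambda))$, the same collapse of the three factors via $(1-X)(1-e(1/3)X)(1-e(2/3)X)=1-X^3$, and the same singular-weight plus Lorentzian-cone argument ($\langle\lambda,\lambda_i\rangle=0$ forces all contributing $\lambda_i$ to be multiples of one primitive isotropic $\mu$) that reduces the Fourier coefficients to those of $\eta(\tau)^3/\eta(3\tau)$. The one (harmless) detour is your derivation of $\rho=0$: the paper obtains $\rho=0$ and $W=C$ immediately from $\gamma\notin L_0'$, since the principal part of $F$ is then supported outside $L_0'/L$ so there are no walls and the Weyl vector formula gives $0$, whereas you recover it a posteriori from the singular-weight support condition --- valid, but the direct observation is simpler.
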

	
	We remark that this equation was already found by Scheithauer in \cite{ScheithauerTwisting}, Proposition 6.10, as a twisted denominator identity of the fake monster superalgebra.

	\begin{proof}
		Since $\gamma \notin L_{0}'$ the Weyl vector $\rho$ is $0$ and the only Weyl chamber is the positive cone $C$. The set of elements $\lambda \in K'$ with $(\lambda,C) > 0$ equals $K'^{+}$. For $\lambda \in K'^{+}$ we have
		\begin{align*}
		c(\lambda,q(\lambda)) &= \big(e((\lambda,\gamma)) + e(-(\lambda,\gamma))\big)[f_{0}](q(\lambda)) = 2[f_{0}](q(\lambda))
		\end{align*}
		since $(\lambda,\gamma) \equiv 0 \mod 1$. Analogously we obtain
		\begin{align*}
		c(\lambda+z/3,q(\lambda)) &= c(\lambda+2z/3,q(\lambda)) =  - [f_{0}](q(\lambda)).
		\end{align*}
		Thus the product expansion of $\Psi_{z}$ is given by
		\begin{align*}
		\Psi_{z}(Z) &= \prod_{\lambda \in K'^{+}}\frac{(1-e((\lambda,Z)))^{2[f_{0}](q(\lambda))}}{\big((1-e(1/3)e((\lambda,Z)))(1-e(2/3)e((\lambda,Z)))\big)^{[f_{0}](q(\lambda))}} \\
		&= \prod_{\lambda \in K'^{+}}\frac{\big(1-e((\lambda,Z))\big)^{3[f_{0}](q(\lambda))}}{\big(1-e(3(\lambda,Z))\big)^{[f_{0}](q(\lambda))}}.
		\end{align*}
		
		Now we compute the Fourier expansion of $\Psi_{z}$. Since $\Psi_{z}$ has singular weight, it has a Fourier expansion of the form
		$$
		\Psi_{z}(Z) = 1 + \sum_{\lambda \in K'^{+}}c(\lambda)e((\lambda,Z))
		$$
		where $c(\lambda) \neq 0$ implies $q(\lambda) = 0$. This allows us to determine the coefficients $c(\lambda)$ in a combinatorial way: Let $\lambda \in K'^{+}$ with $q(\lambda) = 0$ and suppose $\lambda = \sum \lambda_{i}$ with $\lambda_{i} \in K'^{+}$, i.e. the $\lambda_{i}$ contribute to $c(\lambda)$ in the product expansion of $\Psi_{z}$. Then $0 = \langle \lambda, \lambda \rangle = \sum\langle \lambda,\lambda_{i}\rangle$ and $\langle \lambda,\lambda_{i} \rangle \geq 0$ imply $\langle \lambda,\lambda_{i} \rangle = 0$. This means that $\lambda$ and the $\lambda_{i}$ are positive integral multiples of the same primitive norm 0 vector $\mu \in K'^{+}$. Therefore contributions to $c(\lambda)$ in the product expansion come from
		\begin{align*}
		\prod_{m > 0}\frac{\big(1-e((m\mu,Z))\big)^{3[f_{0}](q(m\mu))}}{\big(1-e(3(m\mu,Z))\big)^{[f_{0}](q(m\mu))}} =
		\prod_{m > 0}\frac{\big(1-e((m\mu,Z))\big)^{3}}{1-e(3(m\mu,Z))} = \frac{\eta((\mu,Z))^{3}}{\eta(3(\mu,Z))}.
		\end{align*}
		This shows that for $\lambda = n\mu$ the value $c(\lambda)$ is given by the coefficient at $q^{n}$ of $\eta(\tau)^{3}/\eta(3\tau)$.
	\end{proof}

	We keep the vectors $z,\zeta \in L$ and the decomposition $L = K\oplus \II_{1,1}(3)$ as above, where the hyperbolic plane is generated by $z,\zeta$. Now take two primitive isotropic vectors $x,\xi \in K$ with $\langle x,\xi \rangle = 3$ which span the hyperbolic plane in $K = A_{2}(-1) \oplus \II_{1,1}(3)$ and set $\gamma = x/3 - \xi/3 + L$. Then we have $Q(\gamma) \equiv -1/3 \mod 1$ and $(z,\gamma) \equiv 0 \mod 3$. We write $G$ for the subgroup of $O(K)$ which is generated by the reflections
	$$
	\sigma_{\alpha}(x) = x- \frac{(\alpha,x)}{q(\alpha)}\alpha
	$$
	along vectors $\alpha \in K'$ with $q(\alpha) = -1/3$ and $\alpha + K  = p(\gamma) = x/3-\xi/3 +K$ and call $G$ the Weyl group of $F$. The Weyl chambers of $F$ are the connected components of the positive cone $C$ after removing the reflection hyperplanes $\alpha^{\perp}$ (see \cite{BruinierBorcherdsProducts}, definitions before Theorem 3.22). 
	
	\begin{theorem}
		Let $x, \xi \in K = A_{2}(-1) \oplus \II_{1,1}(3)$ be primitive isotropic vectors with $\langle x,\xi \rangle = 3$ and $\gamma = x/3 - \xi/3 + K$. Choose some Weyl chamber $W$ whose closure contains $x$. Then the Weyl vector $\rho = \frac{1}{3}x$ is a primitive isotropic vector of $K'$ and $\Psi_{z}$ has the Fourier expansion
		\begin{align*}
		\Psi_{z}(Z)
		& = \sum_{w \in G}\det(w)\frac{\eta(9(w(\rho),Z))^{3}}{\eta(3(w(\rho),Z))},
		\end{align*}
		where $G$ is the Weyl group of $F$, i.e. the subgroup of $O(K)$ generated by the reflections along the vectors
		$$
		\alpha \in K' \text{ with } q(\alpha) = -1/3 \text{ and } \alpha + K = p(\gamma).
		$$
	\end{theorem}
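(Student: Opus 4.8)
The plan is to imitate the proof of Theorem~\ref{FourierEntwicklungNot0}; the two new features are that the Weyl vector no longer vanishes and that one must use the symmetry of $\Psi_z$ under the Weyl group $G$. First I would write down the product expansion. Since $z$ is orthogonal to $K$, the element $\gamma=x/3-\xi/3+L$ lies in $L_0'$, with $(z,\gamma)\equiv 0\bmod 3$ and projection $p(\gamma)=x/3-\xi/3+K$ in $K'/K$. From the explicit form of $F=F_{\Gamma_1(3),f,\gamma}$ in Theorem~\ref{thm:Konstruktion}(1) one reads off the coefficients $c(\lambda+jz/3+L,q(\lambda))$, $\lambda\in K'$, $j\in\{0,1,2\}$: they are assembled from the Fourier coefficients of $f=\eta(\tau)/\eta(3\tau)^3$ and of $f_0=\eta(\tau)/\eta(\tau/3)^3$, and the only negative exponent that ever occurs is $-1/3$, attained precisely at the roots $\alpha\in K'$ (those of norm $-1/3$ in the class $p(\gamma)$), with coefficient $[f](-1/3)=1$. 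Feeding this into Theorem~3.22 of \cite{BruinierBorcherdsProducts} gives, on the tube domain over the chosen Weyl chamber $W$,
\[
\Psi_z(Z)=e((\rho_W,Z))\prod_{\alpha>0}\bigl(1-e((\alpha,Z))\bigr)\prod_{\substack{\lambda\in K',\ (\lambda,W)>0\\ q(\lambda)\ge 0}}\ \prod_{j=0}^{2}\bigl(1-e(j/3)e((\lambda,Z))\bigr)^{c(\lambda+jz/3+L,\,q(\lambda))},
\]
where $\alpha$ runs over the positive roots and $\rho_W$ is the Weyl vector of $W$.

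Second I would prove the anti-invariance $\Psi_z(wZ)=\det(w)\Psi_z(Z)$ for all $w\in G$. The divisor of $\Psi$ on $\Gr(L)$ is supported on the hyperplanes $\lambda^\perp$ with $\lambda\in L'$, $q(\lambda)<0$, $c(\lambda+L,q(\lambda))\neq 0$; since the principal part of $F$ is $e(-\tau/3)(\e_{\gamma}+\e_{-\gamma})$, these are exactly the $\lambda\in\pm\gamma+L$ of norm $-1/3$, each of multiplicity $c(\gamma,-1/3)=1$. Near the cusp $z$ this says that the divisor of $\Psi_z$ is the union of the reflection hyperplanes $\alpha^\perp$ of the roots, with multiplicity one, and $G$ permutes these hyperplanes. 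For a root $\alpha$, the reflection $\sigma_\alpha$ extends to an automorphism of the tube domain that fixes $\alpha^\perp$ pointwise and preserves the weight, so $\Psi_z\circ\sigma_\alpha$ and $\Psi_z$ have the same divisor and weight; their quotient is a nowhere-vanishing holomorphic modular function, hence a constant $c$ with $c^2=1$. Since $\Psi_z$ vanishes to first (hence odd) order along the $\sigma_\alpha$-fixed hyperplane $\alpha^\perp$, one must have $c=-1=\det(\sigma_\alpha)$, and the anti-invariance follows.

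Third I would identify the Weyl vector and extract the Fourier expansion. As $x$ is primitive isotropic in $K$ with $\langle x,K\rangle=3\Z$, the vector $\rho=x/3$ lies in $K'$, is isotropic, and is primitive there; evaluating Borcherds' formula for the Weyl vector (see \cite{Borcherds} and \cite{BruinierBorcherdsProducts}), into which only the constant term $c(0,0)=2$ and the roots enter, gives $\rho_W=\rho$ for the chosen chamber, in accordance with the fact that the leading Fourier exponent of the singular-weight form $\Psi_z$ must be an isotropic vector of $\overline W$. Write $\Psi_z(Z)=\sum_{\mu\in K'}a(\mu)e((\mu,Z))$; singular weight forces $a(\mu)=0$ unless $q(\mu)=0$, and the anti-invariance gives $a(w\mu)=\det(w)a(\mu)$. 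If $\mu\in\overline W$ with $a(\mu)\neq 0$ and one expands the product above, then $\mu-\rho$ is a sum of positive roots and of vectors in the closed positive cone of $K$; pairing with $\mu$ and using $\langle\mu,\mu\rangle=0$ together with the reverse Cauchy--Schwarz inequality, one finds $\langle\mu,\rho\rangle=0$, hence $\mu\in\R_{\ge 0}\rho$, and also $\langle\mu,\alpha\rangle=0$ for every root $\alpha$ occurring---which by the anti-invariance would force $a(\mu)=0$; so no root occurs, and $a(\mu)$ is computed from the factors of the product lying on the ray $\R_{\ge 0}\rho$ alone. With $\tau=(\rho,Z)$, the identity $(1-u)(1-e(1/3)u)(1-e(2/3)u)=1-u^3$ and the values $c(n(x/3)+jz/3+L,0)=e(-n/3)+e(n/3)=2\cos(2\pi n/3)$ collapse those factors, so that
\[
e((\rho,Z))\prod_{m\ge 1}\frac{\bigl(1-e(9m\tau)\bigr)^{3}}{1-e(3m\tau)}=\frac{\eta(9\tau)^{3}}{\eta(3\tau)}
\]
is the part of $\Psi_z$ supported in $\overline W$. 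Since $\Psi_z$ is $G$-anti-invariant and $\rho$ has trivial stabilizer in $G$, summing over $G\rho$ with the signs $\det(w)$ yields $\Psi_z(Z)=\sum_{w\in G}\det(w)\,\eta(9(w\rho,Z))^{3}/\eta(3(w\rho,Z))$.

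The main obstacle is the evaluation of Borcherds' Weyl-vector formula giving $\rho_W=x/3$, and, more conceptually, the bookkeeping in the last step: checking that $\rho$ has trivial stabilizer in $G$ and that the $G$-sum of the dominant part really reconstitutes $\Psi_z$. The latter amounts to recognizing the product expansion above as the (super-)denominator identity of the associated Borcherds--Kac--Moody algebra, and it is here that the argument of Theorem~\ref{FourierEntwicklungNot0} has to be genuinely extended.
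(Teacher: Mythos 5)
Your outline follows the paper's proof quite closely: product expansion at the cusp, antisymmetry of $\Psi_z$ under the Weyl group $G$, singular weight forcing the Fourier support onto isotropic vectors, a Lorentzian orthogonality argument reducing the dominant support to the positive multiples of $\rho$, collapse of the three factors via $(1-u)(1-e(1/3)u)(1-e(2/3)u)=1-u^3$ together with $c(m\rho+jz/3+L,0)=e(m/3)+e(-m/3)$, and finally summation over $G$. You actually do more than the paper in one place: the antisymmetry $\Psi_z(wZ)=\det(w)\Psi_z(Z)$ is merely asserted there, whereas you sketch a proof via the simple zeros along the reflection walls. Your derivation of $\langle\mu,\rho\rangle=0$ by pairing the decomposition of $\mu-\rho$ with $\mu$ is an equivalent variant of the paper's pairing with $\rho$.

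There is, however, one genuine gap, at the step ``so no root occurs, and $a(\mu)$ is computed from the factors of the product lying on the ray $\R_{\ge 0}\rho$ alone.'' What you have actually shown is: if some decomposition of $\mu-\rho$ contains a root $\alpha$, then $\langle\mu,\alpha\rangle=0$ and hence $a(\mu)=0$ by anti-invariance. That rules out root contributions only for those $\mu$ with $a(\mu)\neq 0$; it does not justify computing $a(n\rho)$ for \emph{every} $n$ from the ray-product. If some root $\alpha$ had $\langle\alpha,\rho\rangle=0$, then decompositions of $(n-1)\rho$ containing $\alpha$ could contribute to the product-expansion coefficient while the true Fourier coefficient would vanish, and your claimed identity $\sum_n a(n\rho)e(n(\rho,Z))=\eta(9(\rho,Z))^3/\eta(3(\rho,Z))$ would fail, since the right-hand side has nonzero coefficients. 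You must additionally show that no root is orthogonal to $\rho$. The paper does this by a congruence: a root satisfies $\alpha+K=\pm p(\gamma)$, so $\langle\alpha,\rho\rangle\equiv\langle\pm\gamma,x/3\rangle\equiv\pm 1/3\bmod 1$, which is never an integer, let alone zero. (Alternatively: $a(\rho)=1$ from the leading term $e((\rho,Z))$ of the product, so by your own anti-invariance argument $\rho$ cannot be fixed by any $\sigma_\alpha$.) With that one line added, your argument closes and agrees with the paper's; the same observation is also what guarantees your final claim that $\rho$ has trivial stabilizer in $G$.
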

	
	\begin{proof}
		The Weyl vector $\rho = \frac{1}{3}x$ may be computed with the formula from \cite{Borcherds}, Theorem 10.4 (note that there is a mistake in the formula for $\rho_{z}$ which Borcherds corrects in the introduction of \cite{BorcherdsReflectionGroups}: the condition $(\lambda,W) > 0$ has to be dropped and the factor $\frac{1}{2}$ in front of the sum has to be replaced by $\frac{1}{4}$).
		
	The product $\Psi_{z}$ is antisymmetric under the Weyl group $G$, i.e. $\Psi_{z}(w(Z)) = \det(w)\Psi_{z}(Z)$ for all $w \in G$. Since $G$ acts simply transitively on the Weyl chambers of the positive cone of $K \otimes\mathbb{R}$, we find that $\Psi_{z}$ has a Fourier expansion of the form
	$$
	\Psi_{z}(Z) = \sum_{w\in G}\det(w)\sum_{\lambda + \rho \in \overline{W}}c(\lambda)e((w(\lambda+\rho),Z))
	$$
	where $c(\lambda) \neq 0$ is only possible if $q(\lambda + \rho) = 0$ because $\Psi_{z}$ has singular weight. Let $\lambda \in K'$ with $(\lambda,W) > 0$ and $\lambda + \rho \in \overline{W}$. We want to determine $c(\lambda)$. We have
	$$
	\langle \lambda,\rho \rangle = q(\lambda + \rho) - q(\lambda) - q(\rho) = -\langle \lambda,\lambda \rangle/2
	$$
	and, since $\langle \lambda,\lambda + \rho \rangle \geq 0$ ,
	$$
	\langle \lambda,\rho \rangle = \langle \lambda, \lambda + \rho \rangle - \langle \lambda,\lambda \rangle \geq - \langle \lambda,\lambda \rangle.
	$$
	It follows $\langle\lambda,\rho\rangle \leq 0$. On the other hand, we have $\langle \lambda, \rho \rangle \geq 0$ because $\rho \in \overline{W}$ and $(\lambda,W) > 0$. Together we have $\langle\lambda,\rho \rangle = 0, q(\lambda) = 0$ and $\langle \lambda + \rho,\lambda \rangle = 0$. Since both $\rho$ and $\lambda + \rho$ lie in the closure of the positive cone, the condition $\langle \lambda,\lambda+\rho \rangle =0$ means that both are positive integral multiples of the same primitive norm 0 vector in $K'$, hence multiples of $\rho$. Let $\lambda + \rho = n\rho$. 
	
	Suppose $n\rho = \sum \lambda_{i}$ with $\lambda_{i} \in K'$ and $(\lambda_{i},W) > 0$, i.e. the coefficients $\lambda_{i}$ contribute to $c(\lambda)$ in the product expansion of $\Psi_{z}$. Then $0 = \langle n\rho,\rho\rangle = \sum\langle \lambda_{i},\rho \rangle$ and $\langle \lambda_{i},\rho \rangle \geq 0$ show that $\langle \lambda_{i},\rho \rangle = 0$. 
	Assume $q(\lambda_{i}) < 0$. From $c(\lambda_{i}+L,q(\lambda)) \neq 0$ it follows that $\lambda_{i} + L = \pm\gamma + L$ and $\langle \lambda_{i},\rho\rangle \equiv \langle \pm \gamma,\rho \rangle \equiv \pm \frac{1}{3} \mod 1$, a contradiction. Hence $q(\lambda_{i}) \geq 0$ and the $\lambda_{i}$ are positive integral multiples of $\rho$. Therefore contributions in the product expansion of $\Psi_{z}$ to $c(\lambda)$ come from
	\begin{align*}
	e((\rho,Z))\prod_{m > 0}\big(1-e((m\rho,Z)) \big)^{c(m\rho,0)}&\big(1-e(1/3)e((m\rho,Z))\big)^{c(m\rho+z/3,0)} \\
	& \times\big(1-e(2/3)e((m\rho,Z))\big)^{c(m\rho+2z/3,0)}.
	\end{align*}
	We have
	$$
	c(m\rho,0) = c(m\rho+z/3,0) = c(m\rho + 2z/3,0) = e(m(\rho,\gamma)) + e(-m(\rho,\gamma)) 
	$$
	and $(\rho,\gamma) = -1/3 \mod 1$. Hence the exponents are $2$ if $m \equiv 0 \mod 3$ and $-1$ otherwise. Thus the product becomes
	\begin{align*}
	e((\rho,Z))&\prod_{\substack{m > 0 \\ m\not\equiv 0 \modulo 3}}(1-e(3(m\rho,Z)))^{-1} \prod_{\substack{m > 0 \\ m \equiv 0 \modulo 3}} (1-e(3(m\rho,Z)))^{2} \\
	&= e((\rho,Z))\prod_{m > 0}\frac{(1-e(9(m\rho,Z)))^{3}}{1-e(3(m\rho,Z))} = \frac{\eta(9(\rho,Z))^{3}}{\eta(3(\rho,Z))}.
	\end{align*}
	This shows 
	$$
	\sum_{\lambda +\rho \in \overline{W}}c(\lambda)e((\lambda+\rho,Z)) = \frac{\eta(9(\rho,Z))^{3}}{\eta(3(\rho,Z))}. 
	$$
	We obtain the desired Fourier expansion of $\Psi_{z}$.
	\end{proof}
	
	Expansions of the automorphic product of singular weight for the lattice $L=D_{4}(-1) \oplus \II_{1,1}(2) \oplus \II_{1,1}(2)$ can be computed in a similar way. The proofs are almost the same, so we give only the results:
	
	The product expansion of $\Psi_{z}$ at a cusp $z \in L$ is given by
	\begin{align*}
	\Psi_{z}(Z) = e((\rho,Z))\prod_{\substack{\lambda \in K' \\ (\lambda,W) > 0}}(1-e((\rho,Z)))^{c(\lambda,q(\lambda))}(1+e((\rho,Z)))^{c(\lambda + z/2,q(\lambda))},
	\end{align*}
	where $K$ comes from the decomposition $L = K \oplus \II_{1,1}(2)$ in which $\II_{1,1}(2)$ is generated by $z$ and a primitive isotropic $\zeta \in L$ with $\langle z,\zeta \rangle = 2$.
	
	We fix the cusp $z$ and consider different choices for $\gamma \in D$ with $Q(\gamma) \equiv -1/2 \mod 1$. First, let $\gamma = z/2 - \zeta/2 + L$. Then it holds $(\lambda,\gamma) \equiv 0 \mod 1$ for all $\lambda \in K'$ and $\gamma \notin L_{0}'$. Therefore the Weyl vector $\rho$ is $0$ and the only Weyl chamber is the positive cone $C$. We obtain:

		\begin{theorem}
		Let $z, \zeta \in L$ be primitive isotropic vectors with $\langle z,\zeta \rangle = 2$ such that $L = K \oplus \II_{1,1}(2)$ as an orthogonal direct sum where the hyperbolic plane is generated by $z$ and $\zeta$, and $K = D_{4}(-1) \oplus \II_{1,1}(2)$. Choose $\gamma = z/2 -\zeta/2 + L \in L'/L$. Define
		$$
		f_{0}(\tau) = -\frac{1}{16}f|_{-1}S(\tau) = \frac{\eta(\tau)^{4}}{\eta(\tau/2)^{8}} = 1 + 8q^{1/2} + 40q + 160q^{3/2} + 552q^{2} + \dots
		$$
		Let $C$ be the positive cone in $K \otimes \mathbb{R}$ and $K'^{+} = (K' \cap \overline{C}) \setminus \{0\}$. Then 
		\begin{align*}
		\Psi_{z}(Z) &= \prod_{\lambda \in K'^{+}}\frac{\big(1-e((\lambda,Z))\big)^{4[f_{0}](q(\lambda))}}{\big(1+e((\lambda,Z))\big)^{4[f_{0}](q(\lambda))}} = 1 + \sum_{\lambda \in K'^{+}}c(\lambda)e((\lambda,Z)),
		\end{align*}
		where $c(\lambda)$ is $0$ unless $\lambda = n\mu$ for some primitive isotropic $\mu \in K'^{+}$, in which case $c(\lambda)$ equals the coefficient at $q^{n}$ in
		$$
		\frac{\eta(\tau)^{8}}{\eta(2\tau)^{4}} = 1 -8q + 24q^{2} - 32q^{3} + 24q^{4} -48q^{5} + 96q^{6} +  \dots
		$$
	\end{theorem}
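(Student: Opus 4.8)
The plan is to follow the proof of Theorem \ref{FourierEntwicklungNot0} line by line, since the level $2$ case is entirely parallel. I would start from the product expansion of $\Psi_z$ displayed just above,
\[
\Psi_z(Z) = e((\rho,Z))\prod_{\substack{\lambda\in K' \\ (\lambda,W)>0}}\bigl(1-e((\lambda,Z))\bigr)^{c(\lambda,q(\lambda))}\bigl(1+e((\lambda,Z))\bigr)^{c(\lambda+z/2,q(\lambda))},
\]
and compute the two relevant Fourier coefficients of the vector valued form $F=F_{\Gamma_1(2),f,\gamma}$ from Theorem \ref{thm:Konstruktion}. From the explicit lift $F = f\e_\gamma - \tfrac14\sum_{\beta\in D}e((\beta,\gamma))g_{Q(\beta)}\e_\beta$, the relation $g_0+g_{1/2}=f|_{-2}S=-16f_0$, and the facts that $(\lambda,\gamma)\equiv 0\bmod 1$ for $\lambda\in K'$ (as $z,\zeta\perp K$) while $(\lambda+z/2,\gamma)=(z/2,z/2-\zeta/2)\equiv-\tfrac12\bmod 1$, one reads off $c(\lambda+L,q(\lambda))=4[f_0](q(\lambda))$ and $c(\lambda+z/2+L,q(\lambda))=-4[f_0](q(\lambda))$; here one uses that $\lambda+L\neq\gamma$ and $\lambda+z/2+L\neq\gamma$, which is clear since $\gamma$ lies in the $\II_{1,1}(2)$-part while $\lambda\in K'$. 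Since $\gamma\notin L_0'$ the Weyl vector $\rho$ vanishes and $\{\lambda\in K':(\lambda,W)>0\}=K'^+$, so the product takes the claimed form.

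For the Fourier expansion I would invoke that $\Psi_z$ has singular weight, so a Fourier coefficient $c(\lambda)$ is nonzero only if $q(\lambda)=0$. If such an isotropic $\lambda\in K'^+$ is written as $\lambda=\sum\lambda_i$ with $\lambda_i\in K'^+$ contributing in the product expansion, then $0=\langle\lambda,\lambda\rangle=\sum\langle\lambda,\lambda_i\rangle$ with all summands $\geq 0$ forces $\langle\lambda,\lambda_i\rangle=0$, so $\lambda$ and the $\lambda_i$ are positive integral multiples of a single primitive isotropic vector $\mu\in K'^+$. Hence $c(\lambda)$ is determined by
\[
\prod_{m>0}\frac{\bigl(1-e((m\mu,Z))\bigr)^{4[f_0](q(m\mu))}}{\bigl(1+e((m\mu,Z))\bigr)^{4[f_0](q(m\mu))}}=\prod_{m>0}\frac{\bigl(1-e((m\mu,Z))\bigr)^4}{\bigl(1+e((m\mu,Z))\bigr)^4},
\]
using $q(m\mu)=0$ and $[f_0](0)=1$, and the elementary identity
\[
\prod_{m>0}\frac{(1-q^m)^4}{(1+q^m)^4}=\prod_{m>0}\frac{(1-q^m)^8}{(1-q^{2m})^4}=\frac{\eta(\tau)^8}{\eta(2\tau)^4},\qquad q=e^{2\pi i\tau},
\]
then identifies $c(n\mu)$ with the coefficient of $q^n$ in $\eta(\tau)^8/\eta(2\tau)^4$; computing its $q$-expansion $1-8q+24q^2-32q^3+\cdots$ is routine. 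The constant term $1$ of $\Psi_z$ comes from $e((\rho,Z))=1$ together with the trivial lowest-order terms of the product factors.

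There is no genuinely hard step: the argument reduces to bookkeeping in the discriminant form $D=L'/L$ and a standard eta-quotient identity, exactly as in the level $3$ case treated above. The two points that need a little care are getting the sign $e((\lambda+z/2,\gamma))=e(-\tfrac12)=-1$ right, which is responsible for the factor $(1+e((\lambda,Z)))$ rather than $(1-e((\lambda,Z)))$ in the second factor, and keeping in mind that over a given coset $\lambda+K$ the two representatives in $L_0'/L$ are $\lambda$ and $\lambda+z/2$ precisely because the cusp $z$ has level $2$.
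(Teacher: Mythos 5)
Your proof is correct and takes exactly the route the paper intends: the paper omits the argument for this theorem, saying only that it is ``almost the same'' as the proof of Theorem \ref{FourierEntwicklungNot0}, and your write-up carries out precisely that adaptation. All the delicate points are handled correctly --- the coefficients $c(\lambda+L,q(\lambda))=4[f_0](q(\lambda))$ and $c(\lambda+z/2+L,q(\lambda))=-4[f_0](q(\lambda))$ via $e((\lambda+z/2,\gamma))=-1$, the vanishing of the Weyl vector because $\gamma\notin L_0'$, the singular-weight combinatorics forcing contributions from multiples of a single primitive isotropic vector, and the identity $\prod_{m>0}(1-q^m)^4(1+q^m)^{-4}=\eta(\tau)^8/\eta(2\tau)^4$.
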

	
	This equation is the denominator identity of a generalized Kac-Moody algebra and was also found by Scheithauer in \cite{ScheithauerWeil}, Theorem 8.1, as the level $4$ expansion of an automorphic product of singular weight for the lattice $D_{4}(-1) \oplus \II_{1,1} \oplus \II_{1,1}(4)$. The corresponding vector valued modular form can be constructed as the lift of the eta product $2\eta(2\tau)^{4}/\eta(\tau)^{8}$.
	
	We keep the cusp $z$ and the corresponding decomposition $L = K \oplus \II_{1,1}(2)$ fixed and choose primitive isotropic vectors $x,\xi \in K$ with $\langle x,\xi \rangle = 2$. Further, we set $\gamma = x/2-\xi/2 + L$. Then $Q(\gamma) \equiv -1/2 \mod 1$ and $\gamma \in L_{0}'$. The Weyl chambers are the connected components of the positive cone after removing the hyperplanes $\alpha^{\perp}$ for $\alpha \in K'$ with $q(\alpha) = -1/2$ and $\alpha + K = p(\gamma) = x/2 - \xi/2 +K$. 
	
	\begin{theorem}
		Let $x, \xi \in K = D_{4}(-1) \oplus \II_{1,1}(2)$ be primitive isotropic vectors with $\langle x,\xi \rangle = 2$ and $\gamma = x/2 - \xi/2 + K$. Choose some Weyl chamber $W$ whose closure contains $x$. Then the Weyl vector $\rho = \frac{1}{2}x$ is a primitive isotropic vector of $K'$ and $\Psi_{z}$ has the Fourier expansion
		\begin{align*}
		\Psi_{z}(Z)
		& = \sum_{w \in G}\det(w)\frac{\eta(4(w(\rho),Z))^{8}}{\eta(2(w(\rho),Z))^{4}},
		\end{align*}
		where $G$ is the Weyl group of $F$, i.e. the subgroup of $O(K)$ generated by the reflections along the vectors
		$$
		\alpha \in K' \text{ with } q(\alpha) = -1/2 \text{ and } \alpha + K = p(\gamma).
		$$
	\end{theorem}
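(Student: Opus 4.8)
The plan is to follow, essentially verbatim, the proof of the corresponding statement for $\II_{2,4}(3^{+5})$ given above, replacing the level-$3$ data of $A_2(-1)\oplus\II_{1,1}(3)$ by the level-$2$ data of $K=D_4(-1)\oplus\II_{1,1}(2)$ and the two glue classes $z/3,\,2z/3$ by the single class $z/2$. First I would determine the Weyl vector. Since $x,\xi\in K$ are orthogonal to $z$ we have $(z,\gamma)=0$, hence $\gamma\in L_0'$, so the group $G$ generated by the reflections along the vectors $\alpha\in K'$ with $q(\alpha)=-1/2$ and $\alpha+K=p(\gamma)$ is nontrivial. Inserting the Fourier coefficients of $F$ computed in Theorem~\ref{thm:Konstruktion} into Borcherds' formula for $\rho_z$ (\cite{Borcherds}, Theorem~10.4, with the correction noted in the introduction of \cite{BorcherdsReflectionGroups}: drop the condition $(\lambda,W)>0$ and replace $\tfrac12$ by $\tfrac14$) gives $\rho=\tfrac12x$, which is primitive and isotropic in $K'$. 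As $G$ acts simply transitively on the Weyl chambers of the positive cone of $K\otimes\R$ and $\Psi_z$ is antisymmetric under $G$, its Fourier expansion has the shape
\[
\Psi_z(Z)=\sum_{w\in G}\det(w)\sum_{\lambda+\rho\in\overline{W}}c(\lambda)\,e\bigl((w(\lambda+\rho),Z)\bigr),
\]
where, $\Psi_z$ having singular weight $n/2-1=2$, the coefficient $c(\lambda)$ can be nonzero only if $q(\lambda+\rho)=0$.

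Next I would reduce each inner sum to the ray $\Z_{>0}\rho$. As in the level-$3$ proof, combining $q(\lambda+\rho)=0$ and $q(\rho)=0$ with $\rho\in\overline{W}$ and $(\lambda,W)>0$ forces $\langle\lambda,\rho\rangle=0$, $q(\lambda)=0$ and $\langle\lambda+\rho,\lambda\rangle=0$, hence $\lambda+\rho=n\rho$ for some integer $n\geq1$. If the factors $\lambda_i$ contributing to $c(\lambda)$ in the product expansion satisfy $n\rho=\sum_i\lambda_i$, then $\langle\lambda_i,\rho\rangle\geq0$ together with $\sum_i\langle\lambda_i,\rho\rangle=\langle n\rho,\rho\rangle=0$ gives $\langle\lambda_i,\rho\rangle=0$; and if some $\lambda_i$ had $q(\lambda_i)<0$, then because the principal part of $F$ is $e(-\tau/2)\e_\gamma$ we would need $\lambda_i+L=\pm\gamma+L$ and thus $\langle\lambda_i,\rho\rangle\equiv\langle\pm\gamma,\rho\rangle\equiv\pm\tfrac12\pmod1$, a contradiction. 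So every $\lambda_i$ is a positive integral multiple of $\rho$, and $\sum_{\lambda+\rho\in\overline{W}}c(\lambda)e((\lambda+\rho,Z))$ equals the sub-product of the product expansion of $\Psi_z$ at $z$ taken over the factors supported on $\Z_{>0}\rho$.

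Finally I would evaluate that sub-product. The product expansion of $\Psi_z$ at $z$ is
\[
\Psi_z(Z)=e\bigl((\rho,Z)\bigr)\prod_{\substack{\lambda\in K'\\(\lambda,W)>0}}\bigl(1-e((\lambda,Z))\bigr)^{c(\lambda,q(\lambda))}\bigl(1+e((\lambda,Z))\bigr)^{c(\lambda+z/2,q(\lambda))},
\]
and from the explicit form of $F$ one reads off $c(m\rho,0)=c(m\rho+z/2,0)=4\,e\bigl(m(\rho,\gamma)\bigr)$; since $(\rho,\gamma)\equiv-1/2\pmod1$ and $(z/2,\gamma)=0$, these exponents equal $4$ for even $m$ and $-4$ for odd $m$. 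Using $(1-u)(1+u)=1-u^2$ and then splitting over even and odd $m$ gives
\begin{align*}
e\bigl((\rho,Z)\bigr)\prod_{m>0}\bigl(1-e(2(m\rho,Z))\bigr)^{4(-1)^m}
&=e\bigl((\rho,Z)\bigr)\,\frac{\prod_{m>0}\bigl(1-e(4(m\rho,Z))\bigr)^{8}}{\prod_{m>0}\bigl(1-e(2(m\rho,Z))\bigr)^{4}}\\
&=\frac{\eta\bigl(4(\rho,Z)\bigr)^{8}}{\eta\bigl(2(\rho,Z)\bigr)^{4}},
\end{align*}
where the last step is a bookkeeping of the $q^{1/24}$-prefactors of $\eta$ that absorbs $e((\rho,Z))$. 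Summing over $w\in G$ yields the claimed expansion. The only genuinely delicate points are this last computation — correctly pairing the exponents coming from the two glue classes $\lambda$ and $\lambda+z/2$, collapsing the $(1-u)(1+u)$-products, and checking that the $\eta$-prefactors cancel $e((\rho,Z))$ — together with the Weyl-vector calculation; everything else is formally identical to the already-proven $\II_{2,4}(3^{+5})$ case, so the write-up should be short.
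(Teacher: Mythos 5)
Your proposal is correct and is exactly the argument the paper intends: for this theorem the authors only state the result, remarking that "the proofs are almost the same" as in the $\II_{2,4}(3^{+5})$ case, and you carry out that analogy faithfully — the Weyl vector via Borcherds' corrected formula, the reduction to the ray $\Z_{>0}\rho$ using singular weight and the congruence $\langle\gamma,\rho\rangle\equiv -\tfrac12\bmod 1$, and the evaluation $c(m\rho,0)=c(m\rho+z/2,0)=4(-1)^m$ leading to $e((\rho,Z))\prod_{m>0}(1-e(2m(\rho,Z)))^{4(-1)^m}=\eta(4(\rho,Z))^8/\eta(2(\rho,Z))^4$. All the computations check out (including the cancellation of the $q^{1/24}$-prefactors, since $8\cdot\tfrac{4}{24}-4\cdot\tfrac{2}{24}=1$), so nothing further is needed.
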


	\bibliography{references}{}

\begin{thebibliography}{10}

\bibitem{BorcherdsInfiniteProducts}
Richard~E. Borcherds.
\newblock {\em Automorphic Forms on $O_{s+2,2}(R)$ and Infinite Products}.
\newblock Invent. Math. 120, p. 161-213, 1995.

\bibitem{Borcherds}
Richard~E. Borcherds.
\newblock {\em Automorphic Forms with Singularities on Grassmannians}.
\newblock Invent. Math. 132, Nr. 3, p. 491-562, 1998.

\bibitem{BorcherdsGKZ}
Richard~E. Borcherds.
\newblock {\em The Gross-Kohnen-Zagier Theorem in Higher Dimensions}.
\newblock Duke Math. J. 97, Nr. 2, p. 219-233, 1999.

\bibitem{BorcherdsReflectionGroups}
Richard~E. Borcherds.
\newblock {\em Reflection Groups of Lorentzian Lattices}.
\newblock Duke Math. J. 104, Nr. 2000, p. 319-366, 2000.

\bibitem{BruinierBorcherdsProducts}
Jan~Hendrik Bruinier.
\newblock {\em Borcherds Products on $O(2,l)$ and Chern Classes of Heegner
  Divisors}.
\newblock Springer, 2001.

\bibitem{BruinierPicardGroups}
Jan~Hendrik Bruinier.
\newblock {\em On the Rank of Picard Groups of Modular Varieties Attached to
  Orthogonal Groups}.
\newblock Compos. Math. 133, Nr. 1, p. 49-63, 2002.

\bibitem{BruinierKuss}
Jan~Hendrik Bruinier and Michael Kuss.
\newblock {\em Eisenstein Series Attached to Lattices and Modular Forms on
  Orthogonal Groups}.
\newblock Manuscr. Math. 106, p. 443-45, 2001.

\bibitem{ConwaySloane}
John Conway and Neil~J.A. Sloane.
\newblock {\em Sphere Packings, Lattices and Groups, Third Edition}.
\newblock Springer, 1999.

\bibitem{Freitag}
Eberhard Freitag.
\newblock {\em Dimension Formulae for Vector Valued Modular Forms}.
\newblock
  http://www.rzuser.uni-heidelberg.de/$\sim$t91/preprints/riemann-roch.pdf,
  2012.

\bibitem{Hagemeier}
Heike Hagemeier.
\newblock {\em Automorphe Produkte singul\"aren Gewichts}.
\newblock PhD thesis, 2010.

\bibitem{Opitz}
Sebastian Opitz.
\newblock {\em Diskriminantenformen und vektorwertige Modulformen}.
\newblock Master's thesis, 2013.

\bibitem{ScheithauerTwisting}
Nils~R. Scheithauer.
\newblock {\em Twisting the Fake Monster Super Algebra}.
\newblock Adv. Math. 164, p. 325-348, 2001.

\bibitem{ScheithauerClassification}
Nils~R. Scheithauer.
\newblock {\em On the Classification of Automorphic Products and Generalized
  Kac-Moody Algebras}.
\newblock Invent. Math. 164, p. 641-678, 2006.

\bibitem{ScheithauerWeil}
Nils~R. Scheithauer.
\newblock {\em The Weil Representation of $\SL_{2}(\mathbb{Z})$ and Some
  Applications}.
\newblock Int. Math. Res. Notices 2009, Nr. 8, p. 1488-1545, 2009.

\bibitem{ScheithauerModularforms}
Nils~R. Scheithauer.
\newblock {\em Some Constructions of Modular Forms for the Weil Representation
  of $\SL_{2}(\mathbb{Z})$}.
\newblock to appear in Nagoya Math. J., 2011.

\end{thebibliography}
	\bibliographystyle{plain}

\end{document}